\def\tank#1{\protected@xdef\@thanks{\@thanks
 \protect\footnotetext[0]{#1}}}
\def\bigfoot{

 \@footnotetext}
\newcommand{\ea}{\end{array}}
\newtheorem{theorem}{Theorem}[section]
\newtheorem{lem}{Lemma}[section]
\newtheorem{thm}[theorem]{Theorem}
\newtheorem{cor}[theorem]{Corollary}
\newtheorem{dfn}[theorem]{Definition}
\def\beq{\begin{equation}}
\def\nneq{\end{equation}}
\def\bthm{\begin{thm}}
\def\nthm{\end{thm}}
\def\blem{\begin{lem}}
\def\nlem{\end{lem}}
\def\bprf{\begin{proof}}
\def\nprf{\end{proof}}
\def\bprop{\begin{prop}}
\def\nprop{\end{prop}}
\def\brmk{\begin{rem}}
\def\nrmk{\end{rem}}
\def\bexa{\begin{exa}}
\def\nexa{\end{exa}}
\def\bcor{\begin{cor}}
\def\ncor{\end{cor}}
\title{Strong solutions for a stochastic model of 2-D second grade fluids driven by L\'{e}vy noise}
\thanks{ssjln@mail.ustc.edu.cn},\ \
\thanks{zhaijl@ustc.edu.cn},\ \
\thanks{Tusheng.Zhang@manchester.ac.uk}\\
\date{}
\newenvironment{proof}{\par\noindent{\bf Proof:}}{\hspace*{\fill}$\blacksquare$\par}
\begin{document}
\maketitle
\noindent \textbf{Abstract:}
We consider a stochastic model of incompressible non-Newtonian fluids of second grade on a bounded domain of $\mathbb{R}^2$ driven by L\'evy noise. Applying the variational approach, global existence and uniqueness of strong probabilistic solution is established.


\vspace{4mm}


\vspace{3mm}
\noindent \textbf{Key Words:}
Second grade fluids;
Non-Newtonian fluids;
Stochastic partial differential equations;
Locally monotone condition;
L\'{e}vy noise.
\numberwithin{equation}{section}
\section{Introduction}
\indent

In petroleum industry, polymer technology and problems of liquid crystals suspensions, non-Newtonian fluids of differential type often arise (see \cite{1997-Rivlin-p911-1013}). It has attracted much attention from a theoretical point of view, since the classical theory of Newtonian fluids is unable to explain properties observed in the nature. In this article, we are interested in a special class of non-Newtonian fluids of differential type, namely the second grade fluids, which is an admissible model of slow flow fluids such as industrial fluids, slurries, polymer melts, etc.

\vskip 0.3cm

Mathematically, the stochastic models for incompressible second grade fluids are described by the following equation:
\begin{align}\label{1.a}
\left\{
\begin{aligned}
& d(u(t)-\alpha \Delta u(t))+ \Big(-\mu \Delta u(t)+{\rm curl}(u(t)-\alpha \Delta u(t))\times u(t)+\nabla\mathfrak{P}\Big)\,dt \\
& = F(u(t),t)\,dt+\sigma (u(t),t)\,dW(t) +\int_Z f(u(t-),t,z)\widetilde{N}(dzdt),\quad \rm{ in }\ \mathcal{O}\times(0,T], \\
&\begin{aligned}
& {\rm{div}}\,u=0 \quad &&\rm{in}\ \mathcal{O}\times(0,T]; \\
& u=0  &&\rm{in}\ \partial \mathcal{O}\times[0,T]; \\
& u(0)=\xi  &&\rm{in}\ \mathcal{O},
&\end{aligned}
\end{aligned}
\right.
\end{align}

\noindent where $\mathcal{O}$ is a bounded domain of $\mathbb{R}^2$ with boundary $\partial \mathcal{O}$ of class $\mathcal{C}^{3,1}$; $u=(u_1,u_2)$, $\mathfrak{P}$ represent the random velocity and modified pressure respectively; $F(u(t),t)$ is the external force; $W$ is a one-dimensional standard Brownian motion; $\widetilde{N}$ is a compensated Poisson random measure.

\vskip 0.3cm

 The second grade fluids has properties of boundedness, stability and exponential decay (see \cite{1974-Dunn-p191-252}), and has interesting connections with many other fluid models, see \cite{2004-Truesdell-p1-579,1974-Dunn-p191-252,1995-Dunn-p689-729,1979-Fosdick-p145-152,2003-Busuioc-p1119-1119,2001-Shkoller-p539-543} and references therein. For example, as shown in \cite{2002-Iftimie-p457-468}, the second grade fluids reduces to the Navier-Stokes equation(NSE) when $\alpha=0$, and it's also a good approximation of the NSE. We refer the reader to \cite{1984-Cioranescu-p178-197}, \cite{1997-Cioranescu-p317-335}, \cite{1998-Moise-p1369-1369} for a comprehensive theory of the deterministic second grade fluids.
\vskip 0.3cm

On the other hand, in recent years, introducing a jump-type noises such as L\'evy-type or Poisson-type pertubations has become extremely popular for modeling natural phenomena,
because these noises are very nice choice to reproduce the performance of some natural phenomena in real world models, such as some large moves and unpredictable events.
There is a large amount of literature on the existence and uniqueness solutions for stochastic partial differential equations(SPDEs) driven by jump-type noises. We refer the reader to \cite{2009-Dong-p1497-1524,2013-Brzezniak-p122-139,2011-Dong-p2737-2778,2012-Dong-p1150006-1150006,2015-Wang-p3363-3390,2009-Xu-p1519-1545,2015-Zhai-p2351-2392,2014-Brzezniak-p283-310,2016-Hausenblas-p763-800}.
\vskip 0.3cm

We particular mention that for the stochastic 2-D second grade fluids driven by pure jump noises, Hausenblas, Razafimandimby and Sango \cite{2013-Hausenblas-p1291-1331} obtained the global existence of a martingale solution, that is the weak solution in the probabilistic sense.
The purpose of this paper is to establish the global existence and uniqueness of strong probabilistic solutions for stochastic 2-D second grade fluids driven by general L\'evy noises. To obtain our result, we will use the so called variational approach. This approach was initiated by Pardoux \cite{1975-Pardou-p-} and Krylov and Rozovskii \cite{1981-Krylov-p1233-1277}, then further developed by many authors e.g. see \cite{1980-Gyoengy-p1-21,1982-Gyoengy-p231-254,2007-Ren-p118-152,2007-Prevot-p-,2010-Liu-p2902-2922,2014-Brzezniak-p283-310,2013-Liu-p725-755}. It has been proved to be a powerful tool to establish the well-posedness for stochastic dynamical systems with locally monotone and coercive coefficients.
However, the existing results in the literature could not cover the situation considered in this paper, because the second grade fluids do not satisfy the coercivity condition required. Under our consideration, we prove the main result by three steps: we first establish some non-trivial a priori estimates of the Galerkin approximations, then we show the limit of those approximate solutions solves the original equation by applying the monotonicity arguments, finally we prove the uniqueness of solutions.


\vskip 0.3cm
Finally, we mention several other results concerning the study of the stochastic models for two-dimensional second grade fluids driven by Wiener processes. In \cite{2010-Razafimandimby-p1-47,2012-Razafimandimby-p4251-4270}. the authors established the global existence and uniqueness of strong probabilistic solutions, and they also investigated the long time behavior of the solution and the asymptotic behavior of the solutions. Large deviations and moderate deviations for the stochastic models of second grade fluids driven by Wiener processes have been established by in \cite{2015-Zhai-p1-28} and \cite{2016-Zhai} respectively.
The exponential mixing property of the stochastic models of second grade fluids was established in \cite{2016-Wang-p196-213}.
\vskip 0.3cm

The organization of this paper is as follows. In Section 2, we introduce some preliminaries and state some lemmas which will be used later.
In Section 3, we formulate the hypotheses and state our main result.
Section 4 is devoted to the proof of the main result.

\vskip 0.3cm
Throughout this paper, $C,C_1,C_2...$ are positive constants whose value may be different from line to line.

\section{Preliminaries}

In this section, we will describe the framework in details, and introduce some functional spaces and preliminaries that are needed in the paper.
\vskip 0.2cm

Let $(\Omega,\mathcal{F},\{\mathcal{F}_t\}_{t\in[0,T]},P)$ be a given complete filtered probability space, satisfying the usual conditions.
Let $(Z,\mathcal{Z})$ be a measurable space, and $\nu$ a $\sigma$-finite measure on it. For $B\in\mathcal{Z}$ with $\nu(B)<\infty$, we write
\begin{align*}
\widetilde{N}((0,t]\times B):=N((0,t]\times B)-t\nu(B),\quad t\geq 0,
\end{align*}
for the compensated Poisson random measure on $[0,T]\times \Omega\times Z$, where $N$ is a Poisson random measure on $[0,T]\times Z$ with intensity measure $dt\times\nu(dz)$. Assume that $W$ is a one-dimensional standard Brownian motion on the probability space $(\Omega,\mathcal{F},\{\mathcal{F}_t\}_{t\in[0,T]},P)$.
\vskip 0.3cm


To formulate the equation, we introduce the following spaces.
For $p\geq 1$ and $k\in\mathbb{N}$, we denote by $L^p(\mathcal{O})$
and $W^{k,p}(\mathcal{O})$ the usual $L^p$ and Sobolev spaces over $\mathcal{O}$ respectively, and write $H^k(\mathcal{O}):=W^{k,2}(\mathcal{O})$.
Let $W^{k,p}_0(\mathcal{O})$ be
the closure in $W^{k,p}(\mathcal{O})$ of $\mathcal{C}^\infty_c(\mathcal{O})$ the space of infinitely differentiable functions with compact supports in $\mathcal{O}$, and denote $W^{k,2}_0(\mathcal{O})$ by $H_0^k(\mathcal{O})$. We equip $H^1_0(\mathcal{O})$
with the scalar product
\begin{align*}
((u,v))=\int_\mathcal{O}\nabla u\cdot\nabla vdx=\sum_{i=1}^2\int_\mathcal{O}\frac{\partial u}{\partial x_i}\frac{\partial v}{\partial x_i}dx,
\end{align*}

\noindent where $\nabla$ is the gradient operator. It is well known that the norm $\|\cdot\|$ generated by this scalar product is equivalent to the usual norm of $W^{1,2}(\mathcal{O})$.

For any vector space $X$, write $\mathbb{X}=X\times X$. Set
\begin{align*}
&\mathcal{C}=\Big\{u\in[\mathcal{C}^\infty_c(\mathcal{O})]^2 : {\rm div}\ u=0\Big\},\nonumber\\
&\mathbb{V}={\rm\ the\ closure\ of}\ \mathcal{C} {\rm\ in}\ \mathbb{H}^1(\mathcal{O}),\\
&\mathbb{H}={\rm\ the\ closure\ of}\ \mathcal{C}\ {\rm in}\ \mathbb{L}^2(\mathcal{O}).\nonumber
\end{align*}

We denote by $(\cdot,\cdot)$ and $|\cdot|$ the inner product in $\mathbb{L}^2(\mathcal{O})$(in $\mathbb{H}$) and the induced norm, respectively. The inner product and the norm of $\mathbb{H}^1_0(\mathcal{O})$
are denoted respectively by $((\cdot,\cdot))$ and $\|\cdot\|$. We endow the space $\mathbb{V}$ with the norm generated by the following inner product
\[
(u,v)_\mathbb{V}:=(u,v)+\alpha ((u,v)),\quad \text{for any } u,v\in\mathbb{V},
\]

\noindent and the norm in $\mathbb{V}$ is denoted by $|\cdot|_{\mathbb{V}}$. The $\rm Poincar\acute{e}$'s inequality implies that there exists a constant $\mathcal{P}>0$ such that the following inequalities holds
\begin{align}\label{Poincare}
(\mathcal{P}^2+\alpha)^{-1}|v|^2_\mathbb{V} \leq \|v\|^2
\leq\alpha^{-1}|v|^2_\mathbb{V},\quad \text{for any } v\in\mathbb{V}.
\end{align}

\vskip 0.2cm
We also introduce the following space
\[
\mathbb{W}=\big\{u\in\mathbb{V}: {\rm curl}(u-\alpha\Delta u)\in L^2(\mathcal{O})\big\},
\]
and endow it with the semi-norm generated by the scalar product
\begin{align}\label{W}
(u,v)_\mathbb{W}:=\big({\rm curl}(u-\alpha\Delta u),{\rm curl}(v-\alpha\Delta v)\big).
\end{align}
The semi-norm in $\mathbb{W}$ is denoted by $|\cdot|_{\mathbb{W}}$.
The following result states that this semi-norm $|\cdot|_{\mathbb{W}}$ is equivalent to the usual norm in $\mathbb{H}^3(\mathcal{O})$. The proof can be found
in \cite{1984-Cioranescu-p178-197,1997-Cioranescu-p317-335}.
\vskip 0.3cm

\begin{lem}\label{W equivalent definition}
Set

\[
\widetilde{\mathbb{W}}=\big\{v\in\mathbb{H}^3(\mathcal{O}): {\rm div}\,v=0\ {\rm and}\  v|_{\partial \mathcal{O}}=0\big\},
\]

\noindent then the following (algebraic and topological) identity holds:
\begin{align*}
\mathbb{W}=\widetilde{\mathbb{W}}.
\end{align*}

\noindent Moreover, there exists a constant $C> 0$ such that
\begin{align}\label{W-02}
    |v|_{\mathbb{H}^3(\mathcal{O})}
\leq
    C|v|_\mathbb{W},\ \ \ \forall v\in \widetilde{\mathbb{W}}.
\end{align}

\end{lem}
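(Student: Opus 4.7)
The plan is to establish the identity $\mathbb{W} = \widetilde{\mathbb{W}}$ by proving both inclusions, with the continuous embedding in the nontrivial direction coming from elliptic regularity for a fourth-order problem on $\mathcal{O}$.

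The easy direction $\widetilde{\mathbb{W}} \subseteq \mathbb{W}$ is essentially built into the definitions. If $v \in \mathbb{H}^3(\mathcal{O})$ is divergence-free and vanishes on $\partial \mathcal{O}$, then $v \in \mathbb{V}$, and since $\Delta v \in \mathbb{H}^1(\mathcal{O})$ the vector field $v - \alpha \Delta v$ lies in $\mathbb{H}^1(\mathcal{O})$, so $\text{curl}(v - \alpha \Delta v) \in L^2(\mathcal{O})$. Hence $v \in \mathbb{W}$, with the trivial bound $|v|_{\mathbb{W}} \leq C |v|_{\mathbb{H}^3(\mathcal{O})}$.

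For the reverse inclusion and for (\ref{W-02}), let $u \in \mathbb{W}$ and set $g := \text{curl}(u - \alpha \Delta u) \in L^2(\mathcal{O})$. I would exploit the two-dimensional structure by introducing a stream function: since $u \in \mathbb{V}$, and (assuming $\mathcal{O}$ simply connected) $\text{div}\, u = 0$ together with $u|_{\partial \mathcal{O}} = 0$, there exists a unique $\psi \in H^2_0(\mathcal{O})$ with $u = \nabla^\perp \psi$, and then $\text{curl}\, u = \Delta \psi$. A direct computation gives
\[
g = \text{curl}(u - \alpha \Delta u) = \Delta \psi - \alpha \Delta^2 \psi,
\]
so $\psi$ solves the fourth-order problem $\alpha \Delta^2 \psi - \Delta \psi = -g$ in $\mathcal{O}$ with clamped boundary data $\psi = \partial_n \psi = 0$ on $\partial \mathcal{O}$. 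The associated bilinear form $\alpha (\Delta \psi, \Delta \varphi) + ((\psi,\varphi))$ is coercive on $H^2_0(\mathcal{O})$, so $\psi$ is recovered uniquely from $g$, and standard elliptic regularity for this operator on a $\mathcal{C}^{3,1}$ domain yields $\psi \in H^4(\mathcal{O})$ with $|\psi|_{H^4} \leq C |g|_{L^2}$. Taking $\nabla^\perp$ then gives $u \in \mathbb{H}^3(\mathcal{O})$ together with $|u|_{\mathbb{H}^3(\mathcal{O})} \leq C |u|_{\mathbb{W}}$.

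The principal obstacle is the elliptic regularity bootstrap: one must verify that the boundary regularity $\mathcal{C}^{3,1}$ is precisely what is needed for the Agmon--Douglis--Nirenberg theory to give an $H^4$ estimate for the clamped fourth-order operator $\alpha \Delta^2 - \Delta$. A secondary care point is the reduction via a stream function when $\mathcal{O}$ is not simply connected, which requires adding harmonic correctors to $\psi$; alternatively, one can avoid the stream function altogether by rewriting the problem as a Stokes-type system $-\alpha \Delta u + u + \nabla \pi = F$, $\text{div}\, u = 0$, $u|_{\partial \mathcal{O}} = 0$, where $F$ is chosen in a divergence-free gauge with $\text{curl}\, F = g$ so that $F \in \mathbb{H}^1(\mathcal{O})$ by solving $-\Delta \phi = g$ and setting $F = \nabla^\perp \phi$, and then applying Cattabriga's Stokes regularity once (or iteratively) to conclude $u \in \mathbb{H}^3(\mathcal{O})$ with the required norm estimate.
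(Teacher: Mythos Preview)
The paper does not supply its own proof of this lemma at all: immediately before the statement it says ``The proof can be found in \cite{1984-Cioranescu-p178-197,1997-Cioranescu-p317-335}'' and then moves on. So there is nothing in the paper to compare your argument against beyond those citations.

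That said, your sketch is correct and is in fact essentially the argument of Cioranescu--Ouazar \cite{1984-Cioranescu-p178-197}: pass to a stream function $\psi\in H^2_0(\mathcal{O})$, observe that $\mathrm{curl}(u-\alpha\Delta u)=\Delta\psi-\alpha\Delta^2\psi$, and invoke $H^4$ regularity for the clamped problem $\alpha\Delta^2\psi-\Delta\psi=-g$ on a $\mathcal{C}^{3,1}$ domain. Your identification of the two delicate points (that $\mathcal{C}^{3,1}$ is exactly the boundary smoothness needed for the Agmon--Douglis--Nirenberg $H^4$ estimate on a fourth-order operator, and the simple-connectedness issue for the stream function) is accurate; the alternative you propose via Stokes regularity is closer in spirit to the treatment in Cioranescu--Girault \cite{1997-Cioranescu-p317-335} and handles multiply connected $\mathcal{O}$ without correctors. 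Either route is acceptable here.
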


\vskip 0.3cm

If we identify the Hilbert space $\mathbb{V}$ with its dual space $\mathbb{V}^*$ by the Riesz representation, then we obtain a
Gelfand triple
\begin{align*}
\mathbb{W}\subset \mathbb{V}\subset\mathbb{W}^*.
\end{align*}

\noindent We denote by $\langle f,v\rangle$ the dual relation between $f\in\mathbb{W}^*$ and $v\in\mathbb{W}$ from now on. It is easy to see
\begin{align}\label{eq P4 star}
(v,w)_\mathbb{V}=\langle v,w\rangle,\ \ \ \forall\,v\in\mathbb{V},\ \ \forall\,w\in\mathbb{W}.
\end{align}

Note that the injection of $\mathbb{W}$ into $\mathbb{V}$ is compact,
thus there exists a sequence $\{e_i\}$ of elements of $\mathbb{W}$ which forms an orthonormal basis in $\mathbb{W}$, and an orthogonal system in $\mathbb{V}$, moreover this sequence verifies:
\begin{align}\label{Basis}
(v,e_i)_{\mathbb{W}}=\lambda_i(v,e_i)_{\mathbb{V}},\ \text{for any }v\in\mathbb{W},
\end{align}
where $0<\lambda_i\uparrow\infty$. Lemma 4.1 in \cite{1997-Cioranescu-p317-335} implies that
\begin{align}\label{regularity of basis}
e_i\in \mathbb{H}^4(\mathcal{O}),\ \ \forall\,i\in\mathbb{N}.
\end{align}

\vskip 0.2cm

Consider the following ``generalized Stokes equations'':
\begin{align}\label{General Stokes}
\begin{aligned}
v-\alpha \Delta v &=f\quad {\rm in}\quad\mathcal{O},\\
{\rm div}\,v &=0\quad {\rm in}\quad\mathcal{O},\\
v &=0\quad {\rm on}\quad\partial \mathcal{O}.
\end{aligned}
\end{align}

\noindent The following result can be derived from \cite{1968-Solonnikov-p269-339} and also can be found in \cite{2010-Razafimandimby-p1-47,2012-Razafimandimby-p4251-4270}.
\begin{lem}\label{Lem GS}
If $f\in\mathbb{V}$, then the system (\ref{General Stokes}) has a unique solution $v\in\mathbb{W}$, and the following relations hold
\begin{gather}
(v,g)_\mathbb{V}=(f,g),\quad \forall\, g\in \mathbb{V},\label{Eq GS-01}\\
|v|_{\mathbb{W}}\leq C|f|_{\mathbb{V}}.\label{Eq GS-02}
\end{gather}
\end{lem}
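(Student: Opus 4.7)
The plan is to prove this in two stages. First, I would establish existence, uniqueness, and the variational identity \eqref{Eq GS-01} by a pure Hilbert space argument on $\mathbb{V}$. Second, I would upgrade the regularity of the resulting solution from $\mathbb{V}$ to $\mathbb{W}$ by invoking classical regularity theory for the stationary Stokes system, and then appeal to Lemma \ref{W equivalent definition} to package the bound as \eqref{Eq GS-02}.

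\emph{Stage 1 (Riesz representation on $\mathbb{V}$).} Since $\mathbb{V}$ is a Hilbert space with inner product $(\cdot,\cdot)_{\mathbb{V}}$, and since the linear functional $g \mapsto (f,g)$ is bounded on $\mathbb{V}$ (indeed $|(f,g)| \leq |f|\,|g| \leq C|f|_{\mathbb{V}}|g|_{\mathbb{V}}$), the Riesz representation theorem produces a unique $v \in \mathbb{V}$ such that $(v,g)_{\mathbb{V}} = (f,g)$ for every $g \in \mathbb{V}$, together with the a priori bound $|v|_{\mathbb{V}} \leq C|f|_{\mathbb{V}}$. Specializing the identity to $g \in \mathcal{C}$ shows that $v - \alpha \Delta v - f$ annihilates every divergence-free test field, so De Rham's theorem yields a distribution $p$ with $v - \alpha \Delta v + \nabla p = f$ in $\mathcal{O}$; the conditions $\mathrm{div}\,v = 0$ in $\mathcal{O}$ and $v = 0$ on $\partial \mathcal{O}$ are already encoded in $v \in \mathbb{V}$. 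Uniqueness is immediate from the Riesz step.

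\emph{Stage 2 (elliptic regularity).} Rewriting the result of Stage 1 as the stationary Stokes system $-\alpha \Delta v + \nabla p = f - v$ with $\mathrm{div}\,v = 0$ and $v|_{\partial \mathcal{O}} = 0$, the right-hand side $f - v$ lies in $\mathbb{H}^1(\mathcal{O})$. Since $\partial \mathcal{O}$ is of class $\mathcal{C}^{3,1}$, Solonnikov's regularity theorem for the Stokes problem \cite{1968-Solonnikov-p269-339} applies and gives $v \in \mathbb{H}^3(\mathcal{O})$ with
\[
|v|_{\mathbb{H}^3(\mathcal{O})} \leq C|f - v|_{\mathbb{H}^1(\mathcal{O})} \leq C\big(|f|_{\mathbb{V}} + |v|_{\mathbb{V}}\big) \leq C|f|_{\mathbb{V}},
\]
the last step using the bound from Stage 1. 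Lemma \ref{W equivalent definition} then places $v$ in $\mathbb{W} = \widetilde{\mathbb{W}}$, and since the reverse inequality $|v|_{\mathbb{W}} \leq C|v|_{\mathbb{H}^3(\mathcal{O})}$ is immediate from the definition \eqref{W} (curl and Laplacian being first- and second-order differential operators), we arrive at \eqref{Eq GS-02}.

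The main obstacle is clearly Stage 2: producing two extra derivatives of regularity for a solenoidal field that solves a Stokes system with $\mathbb{H}^1$ data. This is precisely the deep content of \cite{1968-Solonnikov-p269-339}, and it is the sole reason for requiring $\mathcal{C}^{3,1}$ regularity of $\partial \mathcal{O}$. Everything else — the Riesz representation, recovering the pressure by De Rham, and passing from the $\mathbb{H}^3$ bound to the $\mathbb{W}$ bound via Lemma \ref{W equivalent definition} — is routine functional analysis.
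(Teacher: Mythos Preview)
Your proposal is correct and is essentially a fleshed-out version of what the paper only asserts by citation: the paper gives no proof of Lemma~\ref{Lem GS}, stating only that the result ``can be derived from \cite{1968-Solonnikov-p269-339} and also can be found in \cite{2010-Razafimandimby-p1-47,2012-Razafimandimby-p4251-4270}.'' Your two-stage argument (Riesz on $\mathbb{V}$ for existence, uniqueness, and \eqref{Eq GS-01}; then Solonnikov regularity for the Stokes system to reach $\mathbb{H}^3$ and hence $\mathbb{W}$ via Lemma~\ref{W equivalent definition}) is precisely the route those references take, and your identification of Solonnikov's theorem as the one nontrivial ingredient --- and the reason for the $\mathcal{C}^{3,1}$ hypothesis on $\partial\mathcal{O}$ --- matches the paper's attribution exactly.
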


\vskip 0.3cm
Define the Stokes operator by
\begin{align}\label{Eq Stoke}
Au:=-\Pi\Delta u,\quad\forall\,u\in D(A)=\mathbb{H}^2(\mathcal{O})\cap\mathbb{V},
\end{align}
here the mapping $\Pi:\mathbb{L}^2(\mathcal{O})\longrightarrow\mathbb{H}$ is the usual Helmholtz-Leray projection.
The Helmholtz-Leray projection implies that for any $u\in \mathbb{L}^2(\mathcal{O})$, there exists a $\phi_u\in\mathbb{H}^1(\mathcal{O})$ such that
\[
u=\Pi u+\nabla\phi_u,
\]
Hence,
\begin{align}\label{curlP=curl}
{\rm curl}(\Pi v)={\rm curl}(v)\quad \forall v\in\mathbb{H}^1.
\end{align}
Therefore,
\begin{align}\label{W equivalence norm}
|u|_\mathbb{W}:=|{\rm curl}(I+\alpha A)u|.
\end{align}

\vskip 0.3cm
It follows from Lemma \ref{Lem GS} that the operator $(I+\alpha A)^{-1}$ defines an isomorphism from $\mathbb{V}$
into $\mathbb{W}$. Moreover, for any $f,g\in\mathbb{V}$, the following properties hold
\begin{align}\label{inverse operator transform}
((I+\alpha A)^{-1}f,g)_\mathbb{V}=(f,g),\nonumber\\
|(I+\alpha A)^{-1}f|_\mathbb{V}\leq C(I+\alpha A)^{-1}f|_\mathbb{W}\leq C_A |f|_{\mathbb{V}}.
\end{align}

Let $\widehat{A}:=(I+\alpha A)^{-1}A$ , then $\widehat{A}$ is a
continuous linear operator from $\mathbb{W}$ onto itself, and satisfies
\begin{align}\label{A transform 01}
(\widehat{A}u,v)_\mathbb{V}=(Au,v)=((u,v)), \quad \forall\,u\in\mathbb{W},\ v\in\mathbb{V},
\end{align}
hence
\begin{align}\label{A transform 02}
(\widehat{A}u,u)_\mathbb{V}=\|u\|, \quad\forall\,u\in\mathbb{W}.
\end{align}
%


\vskip 0.3cm

We recall the following estimates which can be found in \cite{2012-Razafimandimby-p4251-4270}.

\begin{lem}\label{Lem B}
For any $u,v,w\in\mathbb{W}$, we have
\begin{align}\label{Ineq B 01}
    |({\rm curl}(u-\alpha\Delta u)\times v,w)|
\leq
    C|u|_{\mathbb{W}}|v|_\mathbb{V}|w|_{\mathbb{W}},
\end{align}

\noindent and
\begin{align}\label{Ineq B 02}
    |({\rm curl}(u-\alpha\Delta u)\times u,w)|
\leq
    C|u|^2_\mathbb{V}|w|_{\mathbb{W}}.
\end{align}
\end{lem}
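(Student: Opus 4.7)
My plan for the first inequality (\ref{Ineq B 01}) is a direct H\"older estimate. Since in two dimensions ${\rm curl}$ maps vector fields to scalars, one has pointwise $|{\rm curl}(u-\alpha\Delta u)\times v|\le|{\rm curl}(u-\alpha\Delta u)|\,|v|$, and H\"older's inequality with exponents $(2,2,\infty)$ yields
$$|({\rm curl}(u-\alpha\Delta u)\times v,w)|\le|{\rm curl}(u-\alpha\Delta u)|_{L^2}\,|v|_{L^2}\,|w|_{L^\infty}.$$
By (\ref{W equivalence norm}) the first factor equals $|u|_\mathbb{W}$, the second is dominated by $|v|_\mathbb{V}$ (since $|v|_\mathbb{V}^2=|v|^2+\alpha\|v\|^2$), and Lemma \ref{W equivalent definition} combined with the 2D Sobolev embedding $\mathbb{H}^2\hookrightarrow L^\infty$ bounds the third factor by $C|w|_\mathbb{W}$.

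For the second inequality (\ref{Ineq B 02}) the same direct estimate would only yield $|u|_\mathbb{W}|u|_\mathbb{V}|w|_\mathbb{W}$, which is too weak. I plan to exploit additional structure through the algebraic identity
$$({\rm curl}\,\phi\times a,b)=((a\cdot\nabla)\phi,b)-((b\cdot\nabla)\phi,a),$$
which in 2D is verified by expanding both sides in coordinates. Applying it with $\phi=u-\alpha\Delta u$, $a=u$, $b=w$ decomposes the target quantity into
$$((u\cdot\nabla)u,w)-((w\cdot\nabla)u,u)-\alpha((u\cdot\nabla)\Delta u,w)+\alpha((w\cdot\nabla)\Delta u,u).$$
The second term vanishes since $w$ is divergence-free and vanishes on $\partial\mathcal{O}$, while the first is bounded by $C|u|_\mathbb{V}^2|w|_\mathbb{W}$ using H\"older and $\mathbb{W}\hookrightarrow L^\infty$.

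The main obstacle is the pair of terms containing $\Delta u$, because $|\Delta u|_{L^2}$ is not controlled by $|u|_\mathbb{V}$. The idea is to integrate by parts so that derivatives end up on $w\in\mathbb{W}\subset\mathbb{H}^3$ rather than on $u$. A first integration by parts, using $\mathrm{div}\,u=\mathrm{div}\,w=0$ together with $u,w=0$ on $\partial\mathcal{O}$, converts the two $\Delta u$ terms (up to sign) into $((u\cdot\nabla)w,\Delta u)$ and $((w\cdot\nabla)u,\Delta u)$. A second integration by parts then generates only trilinear expressions in $\nabla u$, $u$ and derivatives of $w$ up to order two, each of which can be controlled by $C|u|_\mathbb{V}^2|w|_\mathbb{W}$ via H\"older and the 2D embeddings $\mathbb{H}^3\hookrightarrow W^{1,\infty}$ and $\mathbb{H}^1\hookrightarrow L^4$. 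The one potentially dangerous contribution, $\int w_k(\partial_k\partial_l u_i)(\partial_l u_i)\,dx=\tfrac{1}{2}\int(w\cdot\nabla)|\nabla u|^2\,dx$, vanishes after one more integration by parts, again by $\mathrm{div}\,w=0$ and $w|_{\partial\mathcal{O}}=0$. This cancellation is the key algebraic point that makes the estimate close at the regularity level of $|u|_\mathbb{V}$.
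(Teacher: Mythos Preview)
Your argument is correct. Note that the paper itself does not prove Lemma~\ref{Lem B}: it merely states the estimates and refers to \cite{2012-Razafimandimby-p4251-4270} for the proof. Your route --- a direct H\"older estimate for (\ref{Ineq B 01}), and for (\ref{Ineq B 02}) the pointwise identity $({\rm curl}\,\phi\times a,b)=((a\cdot\nabla)\phi,b)-((b\cdot\nabla)\phi,a)$ followed by two rounds of integration by parts shifting derivatives from $u$ onto $w\in\mathbb{H}^3$ --- is the standard approach used in that literature. The cancellation you single out, $\int_{\mathcal O}(w\cdot\nabla)|\nabla u|^2\,dx=0$, is exactly the structural fact that allows (\ref{Ineq B 02}) to close at the $\mathbb{V}$-regularity level for $u$.
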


Defining the bilinear operator $\widehat{B}(\cdot\,,\cdot):\ \mathbb{W}\times\mathbb{V}\longrightarrow\mathbb{W}^*$ by
\begin{align*}
\widehat{B}(u,v):=(I+\alpha A)^{-1}\Pi\big({\rm curl}(u-\alpha \Delta u)\times v\big).
\end{align*}
We have the following consequence from the above lemma.

\begin{lem}\label{Lem-B-01}

For any $u\in\mathbb{W}$ and $v\in\mathbb{V}$, it holds that
\begin{align}\label{Eq B-01}
    |\widehat{B}(u,v)|_{\mathbb{W}^*}
\leq
    C|u|_\mathbb{W}|v|_\mathbb{V},
\end{align}

\noindent and
\begin{align}\label{Eq B-02}
|\widehat{B}(u,u)|_{\mathbb{W}^*}
\leq
    C_B|u|^2_\mathbb{V}.
\end{align}

\noindent In addition
\begin{align}\label{Eq B-03}
 \langle\widehat{B}(u,v),v\rangle=0, \quad\forall\,u, v\in\mathbb{W},
\end{align}

\noindent which implies
\begin{align}\label{Eq B-04}
 \langle\widehat{B}(u,v),w\rangle=-\langle\widehat{B}(u,w),v\rangle,\quad\forall\,u, v, w\in\mathbb{W}.
\end{align}

\end{lem}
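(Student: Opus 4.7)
The plan is to compute the dual pairing $\langle\widehat{B}(u,v),w\rangle$ against an arbitrary test $w\in\mathbb{W}$, reduce it to the $\mathbb{L}^2$ inner product $({\rm curl}(u-\alpha\Delta u)\times v,\,w)$, and then invoke Lemma \ref{Lem B}. Concretely, for $w\in\mathbb{W}\subset\mathbb{V}$, identity \eqref{eq P4 star} gives $\langle\widehat{B}(u,v),w\rangle=(\widehat{B}(u,v),w)_{\mathbb{V}}$, and the first line of \eqref{inverse operator transform} applied to $f=\Pi({\rm curl}(u-\alpha\Delta u)\times v)$ collapses this to $(\Pi({\rm curl}(u-\alpha\Delta u)\times v),w)$. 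Since $w\in\mathbb{H}$, the Leray projection $\Pi$ is self-adjoint on $\mathbb{L}^2$ with $\Pi w=w$, so this equals $({\rm curl}(u-\alpha\Delta u)\times v,w)$.

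Once this identification is in place, \eqref{Eq B-01} and \eqref{Eq B-02} follow directly from \eqref{Ineq B 01} and \eqref{Ineq B 02}, respectively, by taking the supremum over $w\in\mathbb{W}$ with $|w|_{\mathbb{W}}\leq 1$:
\begin{align*}
|\widehat{B}(u,v)|_{\mathbb{W}^*}=\sup_{|w|_{\mathbb{W}}\leq 1}|\langle\widehat{B}(u,v),w\rangle|\leq C|u|_{\mathbb{W}}|v|_{\mathbb{V}},
\end{align*}
and similarly $|\widehat{B}(u,u)|_{\mathbb{W}^*}\leq C_B|u|^2_{\mathbb{V}}$. A small technical point I will need to justify is that the identities \eqref{inverse operator transform} and \eqref{eq P4 star}, which are stated for $f\in\mathbb{V}$ respectively $v\in\mathbb{V}$, apply here; this reduces to checking that $\Pi({\rm curl}(u-\alpha\Delta u)\times v)$ can be paired with $w\in\mathbb{W}$ consistently as the $\mathbb{V}$-inner product via the isomorphism $(I+\alpha A)^{-1}$, which is precisely how $\widehat{B}(u,v)$ is meant to be read as an element of $\mathbb{W}^*$.

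For \eqref{Eq B-03}, taking $w=v\in\mathbb{W}$ in the identification above yields $\langle\widehat{B}(u,v),v\rangle=({\rm curl}(u-\alpha\Delta u)\times v,\,v)$. In 2D, ${\rm curl}(u-\alpha\Delta u)$ is a scalar field and the ``cross product'' with $v$ produces a vector pointwise orthogonal to $v$, so the integrand vanishes identically and the inner product is zero. Finally, \eqref{Eq B-04} is a routine polarisation of \eqref{Eq B-03}: expanding $0=\langle\widehat{B}(u,v+w),v+w\rangle$ using bilinearity in the second argument, the two diagonal terms $\langle\widehat{B}(u,v),v\rangle$ and $\langle\widehat{B}(u,w),w\rangle$ vanish by \eqref{Eq B-03}, leaving $\langle\widehat{B}(u,v),w\rangle+\langle\widehat{B}(u,w),v\rangle=0$.

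The only mildly subtle step is the first one, reinterpreting $\widehat{B}(u,v)$ as an element of $\mathbb{W}^*$ via the Gelfand triple; after that, the inequalities are a direct repackaging of Lemma \ref{Lem B}, and the algebraic identities \eqref{Eq B-03}–\eqref{Eq B-04} are immediate. I do not expect any genuine analytic obstacle.
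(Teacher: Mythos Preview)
Your proposal is correct and is precisely the argument the paper has in mind: the paper gives no explicit proof, stating only that the lemma is a ``consequence'' of Lemma~\ref{Lem B}, and your write-up supplies exactly those details --- the identification $\langle\widehat{B}(u,v),w\rangle=({\rm curl}(u-\alpha\Delta u)\times v,w)$ via \eqref{eq P4 star} and \eqref{inverse operator transform}, the application of \eqref{Ineq B 01}--\eqref{Ineq B 02}, the pointwise orthogonality for \eqref{Eq B-03}, and the polarisation for \eqref{Eq B-04}. The only point you flag yourself, that $\Pi({\rm curl}(u-\alpha\Delta u)\times v)$ lies a priori only in $\mathbb{H}$ rather than $\mathbb{V}$, is indeed just a matter of reading the definition of $\widehat{B}$ as a $\mathbb{W}^*$-valued map through the duality, and causes no trouble.
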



\section{Hypotheses and the result}

In this section, we will formulate precise assumptions on coefficients and state our main results.

Let
\begin{align*}
F & :\mathbb{V}\times[0,T]\longrightarrow\mathbb{V}; \\
\sigma & :\mathbb{V}\times[0,T]\longrightarrow\mathbb{V}; \\
f & :\mathbb{V}\times [0,T]\times Z\longrightarrow\mathbb{V},
\end{align*}
be given measurable maps. We introduce the following notations:
\begin{gather*}
\widehat{F}(u,t):=(I+\alpha A)^{-1} F(u,t) ;\\
\widehat{\sigma}(u,t):=(I+\alpha A)^{-1} \sigma(u,t) ;\\
\widehat{f}(u,t,z):=(I+\alpha A)^{-1} f(u,t,z) .
\end{gather*}

Applying $(I+\alpha A)^{-1}$ to the equation (\ref{1.a}), we see that (\ref{1.a}) is equivalent to the stochastic evolution equation
\begin{align}\label{Abstract}
\left\{
 \begin{aligned}
 & du(t)+\mu \widehat{A}u(t)dt+\widehat{B}\big(u(t),u(t)\big)dt \\
 &\quad=\widehat{F}(u(t),t)+\widehat{\sigma}(u(t),t)\,dW(t) +\int_Z \widehat{f}(u(t-),t,z)\widetilde{N}(dzdt), \\
 & u(0)=\xi\quad\text{in}\ \mathbb{W}.
 \end{aligned}
\right.
\end{align}

\vskip 0.3cm

Let us formulate assumptions on coefficients. Suppose that there exists constants $C\geq 0$,  $\widetilde{\rho}\in L^1([0,T])$ and $K\in L^2([0,T])$ such that the following hold for all $u_1,u_2\in\mathbb{V}$ and $t\in [0,T]$:

\vskip 0.3cm

\noindent {\bf(S1)} (Lipschitz continuity)
\begin{align*}
&|F(u_1,t)-F(u_2,t)|_{\mathbb{V}}^2 +|\sigma(u_1,t)-\sigma(u_2,t)|_{\mathbb{V}}^2+\int_Z |f(u_1,t,z)-f(u_2,t,z)|_{\mathbb{V}}^2\,\nu(dz) \\
&\leq \widetilde{\rho}(t)|u_1-u_2|_{\mathbb{V}}^2.
\end{align*}

\noindent {\bf(S2)} (Growth condition)
\begin{align*}
|F(u,t)|_{\mathbb{V}}^2+|\sigma(u,t)|_{\mathbb{V}}^2+\int_Z |f(u,t,z)|_{\mathbb{V}}^2\,\nu(dz)\leq K(t)+C|u|_{\mathbb{V}}^2,
\end{align*}

and
\begin{align*}
\int_Z |f(u,t,z)|_{\mathbb{V}}^4\,\nu(dz)\leq K^2(t)+C|u|_{\mathbb{V}}^4.
\end{align*}

\begin{dfn}\label{Def 01}
A $\mathbb{V}$-valued c\`{a}dl\`{a}g and $\mathbb{W}$-valued weakly c\`{a}dl\`{a}g $\{\mathcal{F}_t\}$-adapted stochastic process $u$ is called a solution of the system (\ref{1.a}), if the following two conditions hold:

\noindent (1) $u\in L^2(\Omega\times[0,T];\mathbb{W})$;

\noindent (2) for any $t\in[0,T]$, the following equation holds in $\mathbb{W}^*$ $P$-a.s.:
\begin{align*}
& u(t)+\mu\int_0^t\widehat{A}u(s)\,ds+\int_0^t \widehat{B}\big(u(s),u(s)\big)\,ds \\
=&
\xi+\int_0^t\widehat{F}(u(s),s)\,ds+\int_0^t \widehat{\sigma}(u(s),s)\,dW(s)+\int_0^t \int_Z \widehat{f}(u(s-),s,z)\widetilde{N}(dzds).
\end{align*}

\end{dfn}

Now we can state the main result of this paper.

\begin{thm}\label{theorem 2.1}
Suppose that (S1) and (S2) are satisfied. Then for any $\xi\in L^4(\Omega,\mathcal{F}_0,P;\mathbb{W})$, equation (\ref{Abstract}) has a unique solution $u$. Moreover, $u\in L^4\big(\Omega,\mathcal{F},P;L^\infty([0,T];\mathbb{W})\big)$.
\end{thm}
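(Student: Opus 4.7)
The strategy follows the variational Galerkin scheme: approximate by finite-dimensional SDEs, derive uniform a priori estimates in $\mathbb{V}$ and $\mathbb{W}$, identify the limit via a local monotonicity argument, and conclude uniqueness by an energy estimate in $\mathbb{V}$.

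\textbf{Step 1 (Galerkin and $\mathbb{V}$-estimate).} Let $\Pi_n$ denote the orthogonal projection in $\mathbb{V}$ onto $\mathbb{W}_n:=\mathrm{span}\{e_1,\dots,e_n\}$, where $\{e_i\}$ is the basis from (\ref{Basis}); project (\ref{Abstract}) to obtain a finite-dimensional SDE
\begin{align*}
du^n+\mu\widehat{A}u^n\,dt+\Pi_n\widehat{B}(u^n,u^n)\,dt &= \Pi_n\widehat{F}(u^n,t)\,dt+\Pi_n\widehat{\sigma}(u^n,t)\,dW(t) \\
&\quad +\int_Z \Pi_n\widehat{f}(u^n(t-),t,z)\,\widetilde{N}(dz\,dt),\qquad u^n(0)=\Pi_n\xi.
\end{align*}
By (\ref{Eq B-01}), (S1), (S2), the coefficients are locally Lipschitz with linear growth on $\mathbb{W}_n$, so a unique global solution exists. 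Applying It\^o to $|u^n(t)|_{\mathbb{V}}^{2p}$ for $p=1,2$, using (\ref{A transform 02}) for the viscous term, the cancellation $\langle\widehat{B}(u^n,u^n),u^n\rangle=0$ from (\ref{Eq B-03}), together with BDG and Gronwall, yields
\begin{align*}
\mathbb{E}\Big[\sup_{t\in[0,T]}|u^n(t)|_{\mathbb{V}}^{4}\Big]+\mathbb{E}\int_0^T\|u^n(s)\|^2\,ds\leq C,
\end{align*}
uniformly in $n$; the fourth-moment assumption in (S2) is what allows the $p=2$ case for the compensated Poisson term.

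\textbf{Step 2 ($\mathbb{W}$-estimate, the crux).} Apply It\^o to $|u^n(t)|_{\mathbb{W}}^2=|\mathrm{curl}(I+\alpha A)u^n(t)|^2$, using the characterization (\ref{W equivalence norm}). Because $\widehat{F},\widehat{\sigma},\widehat{f}$ are $\mathbb{W}$-valued by (\ref{inverse operator transform}), all terms make sense in $\mathbb{W}$. The dangerous nonlinear contribution reduces, via (\ref{curlP=curl}) and the definition of $\widehat{B}$, to
\begin{align*}
\big(\mathrm{curl}(u^n-\alpha\Delta u^n)\times u^n,\ \mathrm{curl}(u^n-\alpha\Delta u^n)\big)=0,
\end{align*}
which vanishes by the pointwise orthogonality $(a\times b)\cdot a=0$. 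The viscous term contributes non-negatively, and the noise terms are controlled by (S1), (S2), BDG, and the $\mathbb{V}$-bound of Step 1; Gronwall then gives
\begin{align*}
\mathbb{E}\Big[\sup_{t\in[0,T]}|u^n(t)|_{\mathbb{W}}^4\Big]\leq C,
\end{align*}
uniformly in $n$. This is the main obstacle of the whole proof: because $\widehat{A}$ is not coercive on $\mathbb{W}$, one cannot absorb $|\widehat{B}(u^n,u^n)|_{\mathbb{W}^*}$ into a dissipation term, and the geometric cancellation above is precisely what saves the estimate.

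\textbf{Step 3 (Limit and uniqueness).} The estimates yield weak/weak-$*$ compactness along a subsequence: $u^n\rightharpoonup u$ in $L^2(\Omega\times[0,T];\mathbb{W})$ and weakly-$*$ in $L^4(\Omega;L^\infty(0,T;\mathbb{W}))$. To identify the nonlinear limit I use local monotonicity: from (\ref{Eq B-04}) and (\ref{Ineq B 01}),
\begin{align*}
2\langle \mu\widehat{A}(v-w)+\widehat{B}(v,v)-\widehat{B}(w,w),v-w\rangle\geq -C|w|_{\mathbb{W}}^2|v-w|_{\mathbb{V}}^2,\qquad v,w\in\mathbb{W}.
\end{align*}
Introducing stopping times $\tau_R=\inf\{t:|u(t)|_{\mathbb{W}}\geq R\}$ and running the Krylov--Rozovskii monotonicity argument (comparing It\^o expansions of $|u^n|_{\mathbb{V}}^2$ and testing against an arbitrary $\mathbb{W}$-valued process) identifies $u$ as a solution. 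For uniqueness, applying It\^o to $|u_1(t)-u_2(t)|_{\mathbb{V}}^2\exp(-\int_0^t(\widetilde{\rho}(s)+C|u_1(s)|_{\mathbb{W}}^2)\,ds)$ for two solutions and using (S1) together with the local monotonicity above gives a supermartingale whose initial value is zero, hence $u_1=u_2$; the $\mathbb{W}$-integrability ensured by Step 2 makes the exponential weight almost surely finite.
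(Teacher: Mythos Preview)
Your proposal is correct and follows essentially the same route as the paper: Galerkin approximation, the $\mathbb{W}$-energy identity with the geometric cancellation of the nonlinear term, weak compactness plus the Krylov--Rozovskii local-monotonicity identification, and exponential-weight uniqueness.

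Two small points are worth tightening. First, Step~1 is redundant: the paper obtains the $\mathbb{W}$-estimate directly (Lemma~\ref{Lemma 7.I}) and closes Gronwall in the $\mathbb{W}$-norm alone, using $|\cdot|_{\mathbb{V}}\le C|\cdot|_{\mathbb{W}}$ for the lower-order terms; no separate $\mathbb{V}$-moment bound is needed. Second, your local-monotonicity inequality cannot be derived from (\ref{Ineq B 01}) as you claim: that estimate produces a factor $|v-w|_{\mathbb{W}}$ which is not absorbable in a $\mathbb{V}$-norm energy argument. The correct input is (\ref{Eq B-02}) (equivalently (\ref{Ineq B 02})), which after the cancellation $\langle\widehat{B}(v,v)-\widehat{B}(w,w),v-w\rangle=-\langle\widehat{B}(v-w,v-w),w\rangle$ yields $|\langle\widehat{B}(v-w,v-w),w\rangle|\le C_B|v-w|_{\mathbb{V}}^2\,|w|_{\mathbb{W}}$, i.e.\ the first power of $|w|_{\mathbb{W}}$ rather than the square. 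This is exactly the paper's weight $\rho(w,t)=1+2C_B|w|_{\mathbb{W}}+C_A^2\widetilde{\rho}(t)$ in (\ref{4.1}); your $|w|_{\mathbb{W}}^2$ version would still work for uniqueness given the $L^4(\Omega;L^\infty(0,T;\mathbb{W}))$ bound, but it does not follow from the reference you cite.
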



\section{The proof of Theorem \ref{theorem 2.1}}

We divide the proof of the main result into three parts. In Section 4.1, we establish some crucial a priori estimates for the Galerkin approximations. In Section 4.2, we prove the existence. The uniqueness is proved in Section 4.3.

\subsection{Galerkin approximations}

The proof of the existence of solutions of system (\ref{Abstract}) is based on Galerkin approximations. From (\ref{Basis}) we know that $\{\sqrt{\lambda_i}e_i\}$ is an orthonormal basis of $\mathbb{V}$. Let $\mathbb{V}_n$ denote the n-dimensional subspace spanned by $\{e_1,e_2,\ldots ...,e_n\}$ in $\mathbb{W}$. Let $\Pi_n:\mathbb{W}^*\longrightarrow\mathbb{V}_n$ be defined by
\[
\Pi_ng:=\sum_{i=1}^n\lambda_i\langle g,e_i\rangle e_i,\quad \forall\, g\in\mathbb{W}^*.
\]
Clearly, $\Pi_n |_\mathbb{V}$ is just the orthogonal projection from $\mathbb{V}$ onto $\mathbb{V}_n$.

Now, for any integer $n\geq1$, we seek a solution $u_n$ to the equation

\begin{align}\label{3.1}
\left\{
\begin{aligned}
& du^n(t)= \Pi_n\big[ -\mu \widehat{A}u^n(t)-\widehat{B}\big(u^n(t),u^n(t)\big)+\widehat{F}(u^n(t),t)\big]\,dt +\Pi_n\widehat{\sigma}(u^n(t),t)dW(t)\\
&~~~~~~~~~~~ +\int_Z\Pi_n\widehat{f}(u^n(t-),t,z)\widetilde{N}(dzdt),\quad t>0, \\
& u_n(0)=\Pi_n \xi ,
\end{aligned}
\right.
\end{align}


\noindent such that
\[
u^n(t)=\sum_{j=1}^{n}u_{jn}(t)e_j, \quad t\geq 0 ,
\]
for appropriate choice of real-valued random processes $u_{jn}$.


Let
\begin{gather}
\label{3.2} \widehat{\mathcal{A}}(u,t):= -\mu\widehat{A}u-\widehat{B}(u,u)+\widehat{F}(u,t); \\
\label{4.1} \rho(u,t):=1+2C_B|u|_{\mathbb{W}}+C_A^2\widetilde{\rho}(t).
\end{gather}

\begin{lem}\label{Lemma 3.I}
Under assumptions of Theorem \ref{theorem 2.1},
for all $u,u_1,u_2\in\mathbb{W}$ and any $t\in [0,T]$, the following properties (H1), (H2) and (H3) hold:

\noindent (H1)
The map $s\longmapsto \langle\widehat{\mathcal{A}}(u_1+s u_2,t),u\rangle$ is continuous on $\mathbb{R}$.

\noindent (H2)
\begin{align*}
& 2\langle\widehat{\mathcal{A}}(u_1,t)-\widehat{\mathcal{A}}(u_2,t),u_1-u_2\rangle +|\widehat{\sigma}(u_1,t)-\widehat{\sigma}(u_2,t)|_{\mathbb{V}}^2+\int_Z |\widehat{f}(u_1,t,z)-\widehat{f}(u_2,t,z)|_{\mathbb{V}}^2\,\nu(dz) \\
\leq &\rho(u_2,t)|u_1-u_2|_{\mathbb{V}}^2.
\end{align*}

\noindent (H3)
\begin{align*}
|\widehat{\mathcal{A}}(u,t)|_{{\mathbb{W}}^*}^2\leq C\big(K(t)+|u|_{\mathbb{V}}^2+|u|_{\mathbb{V}}^4\big).
\end{align*}

\noindent If $u\in\mathbb{V}_m$ for some $m\in\mathbb{N}$, then $\widehat{\mathcal{A}}$ can satisfy (H4) for  any $\theta>0$:

\noindent (H4) (Coercivity)
\begin{align*}
\langle\widehat{\mathcal{A}}(u,t),u\rangle +\theta|u|_{\mathbb{W}}^2\leq K(t)+ C_{m,\theta}|u|_{\mathbb{V}}^2.
\end{align*}

\end{lem}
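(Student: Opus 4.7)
The plan is to treat each of (H1)--(H4) separately, leveraging the decomposition $\widehat{\mathcal{A}} = -\mu\widehat{A} - \widehat{B}(\cdot,\cdot) + \widehat{F}$ and exploiting the identities collected in Section~2, particularly (\ref{eq P4 star}), (\ref{A transform 01}), (\ref{inverse operator transform}), and the $\widehat{B}$-estimates of Lemma~\ref{Lem-B-01}. Property (H1) is essentially free: $\widehat{A}$ is linear in $u$, $\widehat{B}(u,u)$ is polynomial of degree~2 in $s$ when $u = u_1 + s u_2$, and $\widehat{F}(\cdot,t)$ is Lipschitz continuous in the $\mathbb{V}$-norm by (S1) combined with the boundedness of $(I+\alpha A)^{-1}:\mathbb{V}\to\mathbb{V}$. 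Hence $\langle \widehat{\mathcal{A}}(u_1+su_2,t),u\rangle$ is a continuous (in fact smooth polynomial plus Lipschitz) function of $s$.

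For (H2), I will compute each piece of $2\langle\widehat{\mathcal{A}}(u_1,t)-\widehat{\mathcal{A}}(u_2,t),u_1-u_2\rangle$. The $\widehat{A}$-contribution equals $-2\mu\|u_1-u_2\|^2 \le 0$ by (\ref{A transform 02}). For the nonlinear term, I will use the algebraic decomposition
\[
\widehat{B}(u_1,u_1)-\widehat{B}(u_2,u_2)=\widehat{B}(u_1,u_1-u_2)+\widehat{B}(u_1-u_2,u_2),
\]
kill the first piece by (\ref{Eq B-03}), and rewrite the second via (\ref{Eq B-04}) as $-\langle\widehat{B}(u_1-u_2,u_1-u_2),u_2\rangle$, which is bounded by $C_B|u_1-u_2|_{\mathbb{V}}^2|u_2|_{\mathbb{W}}$ thanks to (\ref{Eq B-02}). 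For the drift $\widehat{F}$, converting via (\ref{eq P4 star}) and using Young's inequality in the $\mathbb{V}$-inner product yields $2|(\widehat{F}(u_1,t)-\widehat{F}(u_2,t),u_1-u_2)_{\mathbb{V}}|\le C_A^2|F(u_1,t)-F(u_2,t)|_{\mathbb{V}}^2+|u_1-u_2|_{\mathbb{V}}^2$. For the noise pieces, the isomorphism bound $|(I+\alpha A)^{-1}g|_{\mathbb{V}}\le C_A|g|_{\mathbb{V}}$ gives an overall factor $C_A^2$. Adding up and invoking (S1) to bundle $C_A^2(|F|_{\mathbb{V}}^2+|\sigma|_{\mathbb{V}}^2+\int_Z|f|_{\mathbb{V}}^2\,\nu(dz))\le C_A^2\widetilde{\rho}(t)|u_1-u_2|_{\mathbb{V}}^2$ produces exactly $\rho(u_2,t)|u_1-u_2|_{\mathbb{V}}^2$.

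Property (H3) is a straightforward bookkeeping exercise: using $|\widehat{A}u|_{\mathbb{W}^*}\le C|u|_{\mathbb{V}}$ (via (\ref{A transform 01}) and Poincaré (\ref{Poincare})), the bound $|\widehat{B}(u,u)|_{\mathbb{W}^*}\le C_B|u|_{\mathbb{V}}^2$ from (\ref{Eq B-02}), and $|\widehat{F}(u,t)|_{\mathbb{W}^*}\le C|\widehat{F}(u,t)|_{\mathbb{V}}\le CC_A|F(u,t)|_{\mathbb{V}}$ controlled by (S2). For (H4) on $\mathbb{V}_m$, I use $\langle\widehat{A}u,u\rangle=\|u\|^2\ge 0$ so $-\mu\langle\widehat{A}u,u\rangle\le 0$, $\langle\widehat{B}(u,u),u\rangle=0$ by (\ref{Eq B-03}), and a Young-type estimate on $\langle\widehat{F}(u,t),u\rangle$ against (S2). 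Finite-dimensionality of $\mathbb{V}_m$ gives a constant $c_m$ with $|u|_{\mathbb{W}}\le c_m|u|_{\mathbb{V}}$ for $u\in\mathbb{V}_m$, so the extra term $\theta|u|_{\mathbb{W}}^2$ is absorbed into $C_{m,\theta}|u|_{\mathbb{V}}^2$ for any $\theta>0$.

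The main obstacle is the local monotonicity estimate (H2): the coefficient $2C_B|u_2|_{\mathbb{W}}$ in $\rho$ is forced by the fact that $\widehat{B}$ is not monotone, only \emph{locally} monotone, and getting the correct split in the nonlinear term so that the $\mathbb{W}$-norm lands on $u_2$ (not on $u_1-u_2$ or on $u_1$) requires picking the decomposition carefully and combining (\ref{Eq B-03}) with (\ref{Eq B-04}). The remaining subtlety is to do the Young's-inequality bookkeeping for $\widehat{F}$ so that the drift and noise contributions combine exactly into the advertised $1 + C_A^2\widetilde{\rho}(t)$, which requires using (S1) as a single joint bound rather than bounding $F$ and the noise pieces independently.
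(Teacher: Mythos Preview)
Your proposal is correct and follows essentially the same route as the paper: the same treatment of (H1) via linearity/bilinearity/Lipschitz, the same reduction of the $\widehat{B}$-difference in (H2) to $-\langle\widehat{B}(u_1-u_2,u_1-u_2),u_2\rangle$ (the paper reaches this identity by a slightly different expansion but lands on the same expression), the same Young-inequality split for $\widehat{F}$ combined with (S1) and the $C_A$-bound from (\ref{inverse operator transform}), and the same use of (\ref{Eq B-02}), (S2), and norm equivalence on $\mathbb{V}_m$ for (H3)--(H4).
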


\noindent Remark: since the $\mathbb{W}$-norm is stronger than the $\mathbb{V}$-norm, (H4) in general cannot be satisfied.

\begin{proof}
Since $\widehat{A}$ is linear, $\widehat{B}$ is bilinear, $\widehat{F}$ is Lipschitz, (H1) is obvious.
By (\ref{Eq B-03}) we see that
\begin{align*}
&\langle\widehat{B}(u_1,u_1)-\widehat{B}(u_2,u_2),u_1-u_2\rangle \\
=& -\langle\widehat{B}(u_1-u_2,u_1-u_2),u_1\rangle \\
=& -\langle\widehat{B}(u_1-u_2,u_1-u_2),u_2\rangle .
\end{align*}
So, by (\ref{eq P4 star}) (\ref{A transform 01}), (\ref{inverse operator transform}), (\ref{Eq B-02}) and (S1) we have
\begin{align}
\begin{aligned}
& 2\langle\widehat{\mathcal{A}}(u_1,t)-\widehat{\mathcal{A}}(u_2,t),u_1-u_2\rangle \\
&+|\widehat{\sigma}(u_1,t)-\widehat{\sigma}(u_2,t)|_{\mathbb{V}}^2+\int_Z |\widehat{f}(u_1,t,z)-\widehat{f}(u_2,t,z)|_{\mathbb{V}}^2\,\nu(dz) \\
\leq&-2\mu\langle\widehat{A}(u_1-u_2),u_1-u_2\rangle -2\langle\widehat{B}(u_1,u_1)-\widehat{B}(u_2,u_2),u_1-u_2\rangle \\
& +|u_1-u_2|_{\mathbb{V}}^2+|\widehat{F}(u_1,t)-\widehat{F}(u_2,t)|_{\mathbb{V}}^2 \\ &+|\widehat{\sigma}(u_1,t)-\widehat{\sigma}(u_2,t)|_{\mathbb{V}}^2+\int_Z |\widehat{f}(u_1,t,z)-\widehat{f}(u_2,t,z)|_{\mathbb{V}}^2\,\nu(dz) \\
\leq &|u_1-u_2|_{\mathbb{V}}^2 + 2C_B|u_2|_{\mathbb{W}}|u_1-u_2|_{\mathbb{V}}^2 +C_A^2\widetilde{\rho}(t)|u_1-u_2|_{\mathbb{V}}^2 \\
\leq & \rho(u_2,t)|u_1-u_2|_{\mathbb{V}}^2 .
\end{aligned}
\end{align}
This proves (H2).

By (\ref{Eq B-02}) and (S2) we get
\begin{align*}
|\widehat{\mathcal{A}}(u,t)|_{{\mathbb{W}}^*}^2 \leq C\times\big(|\widehat{A}u|_{{\mathbb{W}}^*}^2+ |\widehat{B}(u,u)|_{{\mathbb{W}}^*}^2+ |\widehat{F}(u,t)|_{{\mathbb{W}}^*}^2\big)
\leq C\big(K(t)+|u|_{\mathbb{V}}^2+|u|_{\mathbb{V}}^4\big) ,
\end{align*}
which is (H3).

If $u\in\mathbb{W}_m$ for some $m\in\mathbb{N}$, there exists a constant $C_m$ such that $|u|_{\mathbb{W}}\leq C_m|u|_{\mathbb{V}}$, this will easily yields (H4).

\end{proof}

By Lemma \ref{Lemma 3.I}, according to Theorem 2 of \cite{1980-Gyoengy-p1-21}, equation (\ref{3.1}) has a unique global c\`{a}dl\`{a}g solution $u^n$ that satisfies the following integral equation:

\begin{align}\label{6.1}
\begin{aligned}
u^n(t)=&\Pi_n\xi +\int_0^t\Pi_n\widehat{\mathcal{A}}(u^n(s),s)\,ds +\int_0^t\Pi_n\widehat{\sigma}(u^n(s),s)\,dW(s)\\
&+ \int_0^t\int_Z\Pi_n\widehat{f}(u^n(s-),s,z)\widetilde{N}(dzds) ,
\end{aligned}
\end{align}
for any $t\in[0,T]$

In order to construct solutions to equation (\ref{Abstract}), we need to establish some a priori estimates for $u^n$.

\begin{lem}\label{Lemma 7.I}
Under assumptions of Theorem \ref{theorem 2.1}, we have
\begin{align}\label{7.1.b}
\sup_{n\in\mathbb{N}}E\Big(\sup_{t\in[0,T]}|u^n(t)|^4_{\mathbb{W}}\Big)\leq C\times\Big( \int_0^T K^2(s)\,ds +E|\xi|_{\mathbb{W}}^{4}\Big).
\end{align}
where $K(s), s\leq T$ is the function appeared in the condition (S2).

\end{lem}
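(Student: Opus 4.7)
The plan is to apply It\^o's formula to $|u^n(t)|_{\mathbb{W}}^4$ — equivalently, to $|{\rm curl}(u^n-\alpha\Delta u^n)|_{L^2}^4$ via (\ref{W equivalence norm}) — and to close the resulting inequality using Burkholder--Davis--Gundy together with Gronwall. Because $u^n$ takes values in the finite-dimensional space $\mathbb{V}_n$ spanned by smooth basis vectors $\{e_i\}\subset\mathbb{H}^4$ (see (\ref{regularity of basis})), the standard It\^o formula for c\`adl\`ag semimartingales applies directly. I would first derive a second-moment energy inequality for $|u^n|_{\mathbb{W}}^2$, and then apply It\^o once more (or square and use BDG) to obtain the claimed fourth moment bound.

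The heart of the argument is the $\mathbb{W}$-cancellation
\[
(\Pi_n\widehat{B}(u^n,u^n),u^n)_{\mathbb{W}}=0.
\]
Writing the $\mathbb{W}$-inner product via (\ref{W equivalence norm}) and using (\ref{curlP=curl}), this reduces to $({\rm curl}({\rm curl}(u^n-\alpha\Delta u^n)\times u^n),q^n)_{L^2}$ with $q^n:={\rm curl}(u^n-\alpha\Delta u^n)$; the two-dimensional identity ${\rm curl}(q^n\times u^n)=u^n\cdot\nabla q^n$ combined with ${\rm div}\,u^n=0$ and $u^n|_{\partial\mathcal{O}}=0$ kills the term after integration by parts. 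The viscous contribution $-\mu(\Pi_n\widehat{A}u^n,u^n)_{\mathbb{W}}$ yields a non-positive dissipation up to boundary remainders that are controlled by $\epsilon|u^n|_{\mathbb{W}}^2+C_\epsilon|u^n|_{\mathbb{V}}^2$ via trace inequalities. The forcing, diffusion and jump intensities are handled by combining the growth assumption (S2) with $|\widehat{g}(u,\cdot)|_{\mathbb{W}}\leq C|g(u,\cdot)|_{\mathbb{V}}$ from (\ref{Eq GS-02}) for $g\in\{F,\sigma,f\}$, producing contributions of order $K(s)+|u^n(s)|_{\mathbb{V}}^2\leq K(s)+C|u^n(s)|_{\mathbb{W}}^2$.

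Assembling the estimates, I expect an inequality of the form
\[
|u^n(t)|_{\mathbb{W}}^2\leq|\xi|_{\mathbb{W}}^2+C\!\int_0^t\!\bigl(K(s)+|u^n(s)|_{\mathbb{W}}^2\bigr)ds+M_n(t),
\]
with $M_n$ a local martingale comprising the Brownian and compensated-Poisson parts. Squaring, taking $\sup_{s\le t}$ and expectations, I bound $\sup_s|M_n(s)|^2$ via BDG: the Brownian part is controlled using the $|\sigma|_{\mathbb{V}}^2$-bound in (S2), while the Poisson part requires both the $|f|_{\mathbb{V}}^2$- and the $|f|_{\mathbb{V}}^4$-bounds in (S2), the latter being precisely what closes the fourth moment of the jump integral. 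Gronwall's lemma then delivers (\ref{7.1.b}) uniformly in $n$.

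The step I expect to be most delicate is making the nonlinear $\mathbb{W}$-cancellation rigorous in the Galerkin setting: since $\widehat{B}(u,v)$ lies only in $\mathbb{W}^*$ a priori and $\Pi_n$ is built through the Gelfand triple via the formula $\Pi_ng=\sum_i\lambda_i\langle g,e_i\rangle e_i$, identifying $(\Pi_n\widehat{B}(u^n,u^n),u^n)_{\mathbb{W}}$ with the pure curl integral above requires careful bookkeeping exploiting (\ref{Basis}), (\ref{regularity of basis}) and the smoothness of the basis to verify that the $\mathbb{W}$-orthogonal-projection interpretation of $\Pi_n$ is legitimate in this inner product.
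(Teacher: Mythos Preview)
Your proposal is correct and follows essentially the same route as the paper: It\^o formula in the $\mathbb{W}$-inner product, the nonlinear cancellation via the two-dimensional curl identity, growth bounds from (S2) combined with (\ref{Eq GS-02}), BDG for the stochastic integrals (with the $|f|_{\mathbb{V}}^4$-hypothesis closing the jump part), and Gronwall. Two points of comparison are worth noting. First, your treatment of the viscous term is slightly off: no integration by parts or trace inequalities are needed. The paper uses the purely algebraic identity ${\rm curl}(\Delta u^n)=\alpha^{-1}{\rm curl}(u^n)-\alpha^{-1}{\rm curl}(u^n-\alpha\Delta u^n)$, which immediately gives
\[
-\mu(\widehat{A}u^n,u^n)_{\mathbb{W}}=-\tfrac{\mu}{\alpha}|u^n|_{\mathbb{W}}^2+\tfrac{\mu}{\alpha}\big({\rm curl}\,u^n,{\rm curl}(u^n-\alpha\Delta u^n)\big),
\]
with the cross term bounded by $C|u^n|_{\mathbb{V}}|u^n|_{\mathbb{W}}$; no boundary remainders appear. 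Second, rather than squaring the $|u^n|_{\mathbb{W}}^2$-inequality, the paper applies It\^o a second time to obtain an identity for $|u^n|_{\mathbb{W}}^4$ and then estimates term by term after introducing a stopping time $\tau_M^n$; your squaring route also works. Your worry about $\Pi_n$ is resolved exactly as you suspect: by (\ref{Basis}), for $g\in\mathbb{W}$ one has $\lambda_i(g,e_i)_{\mathbb{V}}=(g,e_i)_{\mathbb{W}}$, so $\Pi_n$ restricted to $\mathbb{W}$ coincides with the $\mathbb{W}$-orthogonal projection onto $\mathbb{V}_n$, and since $u^n\in\mathbb{V}_n$ the projection drops out of $(\Pi_n\widehat{B}(u^n,u^n),u^n)_{\mathbb{W}}$.
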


\begin{proof}
For any given $n\in\mathbb{N}$, $M>0$, we define
\begin{align}
\tau^n_M:=\inf\{t\geq0: |u^n(t)|_{\mathbb{W}}\geq M\}\wedge T.
\end{align}
Since the solution $u^n$ is c\`{a}dl\`{a}g in $\mathbb{W}$ and $\{\mathcal{F}_t\}$-adapted, $\tau^n_M$ is a stopping time, and moreover, $\tau^n_M \uparrow T$ $P$-a.s. as $M\rightarrow\infty$. Note that $|u^n(t)|_{\mathbb{W}}\leq M$, for $t<\tau^n_M$, so there exists a constant $C$ such that $|u^n(t)|_{\mathbb{V}}\leq CM$ whenever $t<\tau^n_M$.

%
By (\ref{eq P4 star}) and (\ref{3.1}) for $i=1,\ldots,n$, we have
\begin{align}\label{14.1}
\begin{aligned}
& d(u^n(t),e_i)_{\mathbb{V}}=d\langle u^n(t),e_i\rangle \\
=& \langle\Pi_n\widehat{\mathcal{A}}(u^n(t),t),e_i\rangle\,dt+\langle\Pi_n\widehat{\sigma}(u^n(t),t),e_i\rangle\,dW(t) +\int_Z\langle\Pi_n\widehat{f}(u^n(t-),t,z),e_i\rangle\widetilde{N}(dzdt) \\
=&(\widehat{\mathcal{A}}(u^n(t),t),e_i)_{\mathbb{V}}\,dt+(\Pi_n\widehat{\sigma}(u^n(t),t),e_i)_{\mathbb{V}}\,dW(t) +\int_Z(\Pi_n\widehat{f}(u^n(t-),t,z),e_i)_{\mathbb{V}}\widetilde{N}(dzdt) .
\end{aligned}
\end{align}

\noindent By (\ref{regularity of basis}) and Lemma \ref{Lem GS}, we know that $\widehat{\mathcal{A}}(u^n(t),t)\in\mathbb{W}$, so multiplying both sides of the equation (\ref{14.1}) by $\lambda_i$, we can use (\ref{Basis}) to obtain
\begin{align*}
d(u^n(t),e_i)_{\mathbb{W}}=(\widehat{\mathcal{A}}(u^n(t),t),e_i)_{\mathbb{W}}\,dt +(\Pi_n\widehat{\sigma}(u^n(t),t),e_i)_{\mathbb{W}}\,dW(t) +\int_Z (\Pi_n\widehat{f}(u^n(t-),t,z),e_i)_{\mathbb{W}}\,\widetilde{N}(dzdt) ,
\end{align*}

\noindent for $i=1,\ldots,n$. Applying It\^{o} formula to $(u^n(t),e_i)_{\mathbb{W}}^2$ and then summing over $i$ from $1$ to $n$ yields
\begin{align}\label{16.1}
\begin{aligned}
|u^n(t)|_{\mathbb{W}}^2=& |u^n(0)|_{\mathbb{W}}^2 +2\int_0^t(\widehat{\mathcal{A}}(u^n(s),s),u^n(s))_{\mathbb{W}}\,ds +\int_0^t|\Pi_n\widehat{\sigma}(u^n(s),s)|_{\mathbb{W}}^2\,ds \\ &+2\int_0^t(\Pi_n\widehat{\sigma}(u^n(s),s),u^n(s))_{\mathbb{W}}\,dW(s)  +2\int_0^t\int_Z (\Pi_n\widehat{f}(u^n(s-),s,z),u^n(s-))_{\mathbb{W}}\widetilde{N}(dzds) \\ &+\int_0^t\int_Z|\Pi_n\widehat{f}(u^n(s-),s,z)|_{\mathbb{W}}^2\,N(dzds) .
\end{aligned}
\end{align}

\noindent By (\ref{curlP=curl}) we have
\begin{align}\label{curlPcurl}
\begin{aligned}
&\Big({\rm curl}\big[\Pi\big({\rm curl}\big(u^n(s)-\alpha\Delta u^n(s)\big)\times u^n(s)\big)\big],{\rm curl}\big(u^n(s)-\alpha\Delta u^n(s)\big)\Big) \\
=&\Big({\rm curl}\big({\rm curl}\big(u^n(s)-\alpha\Delta u^n(s)\big)\times u^n(s)\big),{\rm curl}\big(u^n(s)-\alpha\Delta u^n(s)\big)\Big),
\end{aligned}
\end{align}

\noindent Noticing that (see (4.44), (4.45) and (4.46) in \cite{2010-Razafimandimby-p1-47})
\begin{align}\label{curlcurl=0}
\Big({\rm curl}\big({\rm curl}\big(u^n(s)-\alpha\Delta u^n(s)\big)\times u^n(s)\big),{\rm curl}\big(u^n(s)-\alpha\Delta u^n(s)\big)\Big)=0,
\end{align}

%

\noindent by (\ref{W equivalence norm}), (\ref{3.2}), (\ref{curlPcurl}) and (\ref{curlcurl=0}) we get
\begin{align}\label{16.2}
\begin{aligned}
& (\widehat{\mathcal{A}}(u^n(s),s),u^n(s))_{\mathbb{W}}
= \Big({\rm curl}\big[(I+\alpha A)\widehat{\mathcal{A}}(u^n(s),s)\big],{\rm curl}\big(u^n(s)-\alpha\Delta u^n(s)\big)\Big) \\
=& (\widehat{F}(u^n(s),s),u^n(s))_{\mathbb{W}}+ \mu\Big({\rm curl}(\Delta u^n(s)),{\rm curl}\big(u^n(s)-\alpha\Delta u^n(s)\big)\Big) \\
& -\Big({\rm curl}\big[\Pi\big({\rm curl}\big(u^n(s)-\alpha\Delta u^n(s)\big)\times u^n(s)\big)\big],{\rm curl}\big(u^n(s)-\alpha\Delta u^n(s)\big)\Big) \\
=& (\widehat{F}(u^n(s),s),u^n(s))_{\mathbb{W}}-\frac{\mu}{\alpha}\big|u^n(s)\big|_{\mathbb{W}}^2 + \frac{\mu}{\alpha}\Big({\rm curl}\big(u^n(s)\big),{\rm curl}\big(u^n(s)-\alpha\Delta u^n(s)\big)\Big) .
\end{aligned}
\end{align}

\noindent Substituting the above equality into (\ref{16.1}) we have
\begin{align}\label{17.1}
\begin{aligned}
& |u^n(t)|_{\mathbb{W}}^2+ \frac{2\mu}{\alpha}\int_0^t|u^n(s)|_{\mathbb{W}}^2\,ds \\
=& |u^n(0)|_{\mathbb{W}}^2 +\frac{2\mu}{\alpha}\int_0^t\Big({\rm curl}\big(u^n(s)\big),{\rm curl}\big(u^n(s)-\alpha\Delta u^n(s)\big)\Big)\,ds \\
& +2\int_0^t(\widehat{F}(u^n(s),s),u^n(s))_{\mathbb{W}}\,ds  +\int_0^t|\Pi_n\widehat{\sigma}(u^n(s),s)|_{\mathbb{W}}^2\,ds \\ &+2\int_0^t(\Pi_n\widehat{\sigma}(u^n(s),s),u^n(s))_{\mathbb{W}}\,dW(s)  +2\int_0^t\int_Z (\Pi_n\widehat{f}(u^n(s-),s,z),u^n(s-))_{\mathbb{W}}\widetilde{N}(dzds) \\ &+\int_0^t\int_Z|\Pi_n\widehat{f}(u^n(s-),s,z)|_{\mathbb{W}}^2\,N(dzds) .
\end{aligned}
\end{align}

\noindent Applying the It\^{o} formula (cf.\cite{1982-Metivier-p-}) to (\ref{17.1}) we have

\begin{align}\label{21.6.1}
\begin{aligned}
& |u^n(t)|_{\mathbb{W}}^4+ \frac{4\mu}{\alpha}\int_0^t|u^n(s)|_{\mathbb{W}}^4\,ds \\
=& |u^n(0)|_{\mathbb{W}}^4 +\frac{4\mu}{\alpha}\int_0^t|u^n(s)|_{\mathbb{W}}^2\Big({\rm curl}\big(u^n(s)\big),{\rm curl}\big(u^n(s)-\alpha\Delta u^n(s)\big)\Big)\,ds \\
& +4\int_0^t|u^n(s)|_{\mathbb{W}}^2(\widehat{F}(u^n(s),s),u^n(s))_{\mathbb{W}}\,ds \\
& +2\int_0^t|u^n(s)|_{\mathbb{W}}^2|\Pi_n\widehat{\sigma}(u^n(s),s)|_{\mathbb{W}}^2\,ds +4\int_0^t(\Pi_n\widehat{\sigma}(u^n(s),s),u^n(s))_{\mathbb{W}}^2\,ds  \\
&+4\int_0^t|u^n(s)|_{\mathbb{W}}^2(\Pi_n\widehat{\sigma}(u^n(s),s),u^n(s))_{\mathbb{W}}\,dW(s)  \\
&+4\int_0^t\int_Z|u^n(s-)|_{\mathbb{W}}^2(\Pi_n\widehat{f}(u^n(s-),s,z),u^n(s-))_{\mathbb{W}}\widetilde{N}(dzds) \\ &+4\int_0^t\int_Z(\Pi_n\widehat{f}(u^n(s-),s,z),u^n(s-))_{\mathbb{W}}^2\,N(dzds) \\
&+2\int_0^t\int_Z |u^n(s-)|_{\mathbb{W}}^2|\Pi_n\widehat{f}(u^n(s-),s,z)|_{\mathbb{W}}^2\,N(dzds) \\
&+\int_0^t\int_Z |\Pi_n\widehat{f}(u^n(s-),s,z)|_{\mathbb{W}}^4\,N(dzds) \\
&+4\int_0^t\int_Z (\Pi_n\widehat{f}(u^n(s-),s,z),u^n(s-))_{\mathbb{W}}|\Pi_n\widehat{f}(u^n(s-),s,z)|_{\mathbb{W}}^2\,N(dzds).
\end{aligned}
\end{align}

\noindent Set
\begin{align*}
Y^n(r):=\sup_{0\leq t\leq r\wedge\tau^n_M}|u^n(t)|_{\mathbb{W}}^4 .
\end{align*}

\noindent Taking the sup over $t\leq r\wedge\tau^n_M$, then taking expectations we get
\begin{align}\label{21.6.2}
\begin{aligned}
& EY^n(r) +\frac{4\mu}{\alpha} E \int_0^{r\wedge\tau^n_M}|u^n(s)|_{\mathbb{W}}^4\,ds \\
\leq & E|u^n(0)|_{\mathbb{W}}^4 +I_1+I_2+I_3+I_4 ,
\end{aligned}
\end{align}
where
\begin{align*}
&\begin{aligned}
  I_1:=& \frac{4\mu}{\alpha}E\int_0^{r\wedge\tau^n_M}|u^n(s)|_{\mathbb{W}}^2\Big|\Big({\rm curl}\big(u^n(s)\big),{\rm curl}\big(u^n(s)-\alpha\Delta u^n(s)\big)\Big)\Big|\,ds \\
& +4E\int_0^{r\wedge\tau^n_M}|u^n(s)|_{\mathbb{W}}^2\big|(\widehat{F}(u^n(s),s),u^n(s))_{\mathbb{W}}\big|\,ds \\
& +2E\int_0^{r\wedge\tau^n_M}|u^n(s)|_{\mathbb{W}}^2|\Pi_n\widehat{\sigma}(u^n(s),s)|_{\mathbb{W}}^2\,ds +4E\int_0^{r\wedge\tau^n_M}(\Pi_n\widehat{\sigma}(u^n(s),s),u^n(s))_{\mathbb{W}}^2\,ds  ;
&\end{aligned}
\\
&\begin{aligned}
    &I_2:=4E\sup_{0\leq t\leq r\wedge\tau^n_M}\Big|\int_0^t|u^n(s)|_{\mathbb{W}}^2(\Pi_n\widehat{\sigma}(u^n(s),s),u^n(s))_{\mathbb{W}}\,dW(s) \Big|;
&\end{aligned}
\\
&\begin{aligned}
    I_3:= 4E\sup_{0\leq t\leq r\wedge\tau^n_M}\Big|\int_0^t\int_Z|u^n(s-)|_{\mathbb{W}}^2(\Pi_n\widehat{f}(u^n(s-),s,z),u^n(s-))_{\mathbb{W}}\widetilde{N}(dzds)\Big| ;
&\end{aligned}
\\
&\begin{aligned}
I_4:=E\int_0^{r\wedge\tau^n_M}\int_Z\big|\mathcal{H}(u^n(s-),s,z)\big|\,N(dzds) ,
&\end{aligned}
\end{align*}

\noindent where
\begin{align*}
\mathcal{H}(u^n(s-),s,z):=& 4(\Pi_n\widehat{f}(u^n(s-),s,z),u^n(s-))_{\mathbb{W}}^2 +2|u^n(s-)|_{\mathbb{W}}^2|\Pi_n\widehat{f}(u^n(s-),s,z)|_{\mathbb{W}}^2 \\
& +|\Pi_n\widehat{f}(u^n(s-),s,z)|_{\mathbb{W}}^4 +4(\Pi_n\widehat{f}(u^n(s-),s,z),u^n(s-))_{\mathbb{W}}|\Pi_n\widehat{f}(u^n(s-),s,z)|_{\mathbb{W}}^2 .
\end{align*}

We now estimate terms $I_1,I_2,I_3,I_4$.
By the condition (S2) and the inequalities
\[
|\widehat{F}(u^n(s),s)|_{\mathbb{W}}+|\widehat{\sigma}(u^n(s),s)|_{\mathbb{W}}\leq C(|F(u^n(s),s)|_{\mathbb{V}}+|\sigma(u^n(s),s)|_{\mathbb{V}}),
\]
\[
|u^n(s)|_{\mathbb{W}}^3|F(u^n(s),s)|_{\mathbb{V}}\leq \frac{1}{2}|u^n(s)|_{\mathbb{W}}^4+\frac{1}{2}|u^n(s)|_{\mathbb{W}}^2|F(u^n(s),s)|^2_{\mathbb{V}},
\]
\[
\big|{\rm curl}(v)\big|^2\leq \frac{2}{\alpha}|v|_{\mathbb{V}}^2,\quad \text{for any } v\in\mathbb{V} ,
\]
we have
\begin{align}\label{21.7.1}
\begin{aligned}
I_1:\leq & C E\int_0^{r\wedge\tau^n_M}|u^n(s)|_{\mathbb{W}}^3|u^n(s)|_{\mathbb{V}}\,ds
+C E\int_0^{r\wedge\tau^n_M}|u^n(s)|_{\mathbb{W}}^3|F(u^n(s),s)|_{\mathbb{V}}\,ds \\
& +6E\int_0^{r\wedge\tau^n_M}|u^n(s)|_{\mathbb{W}}^2|\widehat{\sigma}(u^n(s),s)|_{\mathbb{W}}^2\,ds \\
\leq & CE\int_0^{r\wedge\tau^n_M}|u^n(s)|_{\mathbb{W}}^4\,ds
+C E\int_0^{r\wedge\tau^n_M}|u^n(s)|_{\mathbb{W}}^2|F(u^n(s),s)|_{\mathbb{V}}^2\,ds \\
& +CE\int_0^{r\wedge\tau^n_M}|u^n(s)|_{\mathbb{W}}^2|\sigma(u^n(s),s)|_{\mathbb{V}}^2\,ds  \\
\leq & CE\int_0^{r\wedge\tau^n_M}|u^n(s)|_{\mathbb{W}}^4\,ds + C\int_0^T K^2(s)\,ds .
\end{aligned}
\end{align}

\noindent For $I_2$ and $I_3$, we apply BDG inequality (cf. Theorem 1 of \cite{1986-Ichikawa-p329-339}) and Young inequality

\begin{align}\label{21.8.1}
\begin{aligned}
I_2
\leq & C E\Big(\int_0^{r\wedge\tau^n_M}|u^n(s)|_{\mathbb{W}}^4(\Pi_n\widehat{\sigma}(u^n(s),s),u^n(s))_{\mathbb{W}}^2\,ds\Big)^{\frac{1}{2}} \\
\leq & C E\bigg[\sup_{0\leq s\leq r\wedge\tau^n_M}|u^n(s)|_{\mathbb{W}}^2 \times\Big(\int_0^{r\wedge\tau^n_M}(\Pi_n\widehat{\sigma}(u^n(s),s),u^n(s))_{\mathbb{W}}^2\,ds\Big)^{\frac{1}{2}}\bigg] \\
\leq & C\varepsilon_2 E\sup_{0\leq s\leq r\wedge\tau^n_M}|u^n(s)|_{\mathbb{W}}^4 +\frac{C}{\varepsilon_2}E\int_0^{r\wedge\tau^n_M}(\Pi_n\widehat{\sigma}(u^n(s),s),u^n(s))_{\mathbb{W}}^2\,ds \\
\leq & C\varepsilon_2 EY^n(r) +\frac{C}{\varepsilon_2}E\int_0^{r\wedge\tau^n_M}|u^n(s)|_{\mathbb{W}}^2|\sigma(u^n(s),s)|^2_{\mathbb{V}}\,ds \\
\leq & C\varepsilon_2 EY^n(r) +\frac{C}{\varepsilon_2}\int_0^T K^2(s)\,ds +\frac{C}{\varepsilon_2}E\int_0^{r\wedge\tau^n_M}|u^n(s)|_{\mathbb{W}}^4\,ds ;
\end{aligned}
\end{align}

\begin{align}\label{21.8.2}
\begin{aligned}
I_3
\leq & CE\Big(\int_0^{r\wedge\tau^n_M}\int_Z|u^n(s)|_{\mathbb{W}}^4(\Pi_n\widehat{f}(u^n(s-),s,z),u^n(s))_{\mathbb{W}}^2\,\nu(dz)ds\Big)^{\frac{1}{2}} \\
\leq & C E\bigg[\sup_{0\leq s\leq r\wedge\tau^n_M}|u^n(s)|_{\mathbb{W}}^2 \times\Big(\int_0^{r\wedge\tau^n_M}\int_Z(\Pi_n\widehat{f}(u^n(s-),s,z),u^n(s))_{\mathbb{W}}^2\,\nu(dz)ds\Big)^{\frac{1}{2}}\bigg] \\
\leq & C\varepsilon_3 E\sup_{0\leq s\leq r\wedge\tau^n_M}|u^n(s)|_{\mathbb{W}}^4 +\frac{C}{\varepsilon_3}E\int_0^{r\wedge\tau^n_M}\int_Z(\Pi_n\widehat{f}(u^n(s-),s,z),u^n(s))_{\mathbb{W}}^2\,\nu(dz)ds \\
\leq & C\varepsilon_3 EY^n(r) +\frac{C}{\varepsilon_3}E\int_0^{r\wedge\tau^n_M}\int_Z|u^n(s)|_{\mathbb{W}}^2|f(u^n(s),s,z)|_{\mathbb{V}}^2\,\nu(dz)ds \\
\leq & C\varepsilon_3 EY^n(r) +\frac{C}{\varepsilon_3}E\int_0^{r\wedge\tau^n_M}|u^n(s)|_{\mathbb{W}}^2(K(s)+|u^n(s)|^2_{\mathbb{V}})\,ds \\
\leq & C\varepsilon_3 EY^n(r) +\frac{C}{\varepsilon_3}\int_0^T K^2(s)\,ds +\frac{C}{\varepsilon_3}E\int_0^{r\wedge\tau^n_M}|u^n(s)|_{\mathbb{W}}^4\,ds .
\end{aligned}
\end{align}

\noindent By a straightforward calculation we see that
\begin{align*}
\big|\mathcal{H}(u^n(s-),s,z)\big|
\leq  C|u^n(s-)|_{\mathbb{W}}^2|f(u^n(s-),s,z)|_{\mathbb{V}}^2 +C|f(u^n(s-),s,z)|_{\mathbb{V}}^4 ,
\end{align*}

\noindent therefore, by (S2) we have
\begin{align}\label{21.9.1}
\begin{aligned}
I_4=& E\int_0^{r\wedge\tau^n_M}\int_Z\big|\mathcal{H}(u^n(s),s,z)\big|\,\nu(dz)ds \\
\leq & CE\int_0^{r\wedge\tau^n_M}\int_Z|u^n(s)|_{\mathbb{W}}^2|f(u^n(s),s,z)|_{\mathbb{V}}^2\,\nu(dz)ds
 +CE\int_0^{r\wedge\tau^n_M}\int_Z|f(u^n(s),s,z)|_{\mathbb{V}}^4\,\nu(dz)ds \\
\leq & CE\int_0^{r\wedge\tau^n_M}|u^n(s)|_{\mathbb{W}}^2(K(s)+C|u^n(s)|_{\mathbb{V}}^2)\,ds + CE\int_0^{r\wedge\tau^n_M}(K^2(s)+C|u^n(s)|_{\mathbb{V}}^4)\,ds \\
\leq & CE\int_0^{r\wedge\tau^n_M}|u^n(s)|_{\mathbb{W}}^4\,ds + C\int_0^T K^2(s)\,ds .
\end{aligned}
\end{align}

\noindent Substituting (\ref{21.7.1})-(\ref{21.9.1}) into (\ref{21.6.2}), then choosing sufficiently small $\varepsilon_2,\varepsilon_3$ we obtain
\begin{align}\label{21.9.2}
\begin{aligned}
EY^n(r) \leq C E|u^n(0)|_{\mathbb{W}}^4 +C\int_0^T K^2(s)\,ds + C E\int_0^r Y^n(s)\,ds .
\end{aligned}
\end{align}

\noindent Applying Gronwall inequality we have
\begin{align}\label{21.9.3}
 E\sup_{0\leq t\leq r\wedge\tau^n_M}|u^n(t)|_{\mathbb{W}}^4
\leq  C\times\Big(\int_0^T K^2(s)\,ds +E|\xi|_{\mathbb{W}}^4\Big) .
\end{align}

\noindent Therefore, letting $M\rightarrow\infty$, by the Fatou lemma we get
\begin{align}\label{21.9.4}
 E\sup_{0\leq t\leq T}|u^n(t)|_{\mathbb{W}}^4
\leq  C\times\Big(\int_0^T K^2(s)\,ds +E|\xi|_{\mathbb{W}}^4\Big) .
\end{align}
The proof of Lemma \ref{Lemma 7.I} is complete.

\end{proof}

\subsection{Existence of solutions}

\begin{lem}\label{Lemma 22.I}
Under assumptions of Theorem \ref{theorem 2.1}, there exists a subsequence $(n_k)$ and an element $\overline{u}\in L^4(\Omega;L^{\infty}([0,T];\mathbb{W}))$ such that
\begin{align*}
&(i)\  u^{n_k}\rightharpoonup \overline{u} \ \text{weakly in}\ L^2(\Omega\times[0,T];\mathbb{V})\ \text{and in}\ L^2(\Omega;L^{\infty}([0,T];\mathbb{V}))\ \text{under weak-* topology;}\\
&(ii)\  \widehat{A}u^{n_k}(s)\rightharpoonup \widehat{A}(s)\overline{u} \ \text{ in}\ L^2(\Omega\times[0,T];\mathbb{W}^*)\ \text{under weak-* topology;} \\
&(iii)\  \widehat{B}(u^{n_k}(s),u^{n_k}(s))\rightharpoonup \widehat{B}^*(s) \ \text{ in}\ L^2(\Omega\times[0,T];\mathbb{W}^*)\ \text{under weak-* topology;} \\
&(iv)\  \widehat{F}(u^{n_k}(s),s)\rightharpoonup \widehat{F}^*(s) \ \text{ in}\ L^2(\Omega\times[0,T];\mathbb{W}^*)\ \text{under weak-* topology;} \\
&(v)\   \widehat{\sigma}(u^{n_k}(s),s)\rightharpoonup \widehat{\sigma}^*(s) \ \text{ weakly in}\ L^2(\Omega\times[0,T];\mathbb{V}),\\
&~~~ \ \text{and}\  \widehat{\sigma}^*(s) \text{ is } \{\mathcal{F}_t\} \text{-progressively measurable, moreover}\\
& \ \begin{aligned}
\quad \int_0^\cdot\widehat{\sigma}(u^{n_k}(s),s)\,dW(s)\rightharpoonup\int_0^\cdot\widehat{\sigma}^*(s)\,dW(s)
\ \text{ in}\ L^{\infty}([0,T];L^2(\Omega,\mathcal{F}_T;\mathbb{V}))\ \text{under weak-* topology;}
& \ \end{aligned} \\
&(vi)\  \widehat{f}(u^{n_k}(s-),s,z)\rightharpoonup \widehat{f}^*(s,z) \ \text{ weakly in}\ L^2(\Omega\times[0,T]\times Z;P\times \nu\times dt;\mathbb{V}),\\
&~~~\text{ and } \widehat{f}^*(s,z) \text{ is } \{\mathcal{F}_t\} \text{-predictable, moreover in}\ L^{\infty}([0,T];L^2(\Omega,\mathcal{F}_T;\mathbb{V}))\ \text{under weak-* topology, } \\
& \ \begin{aligned}
\quad\int_0^\cdot\int_Z\widehat{f}(u^{n_k}(s-),s)\widetilde{N}(dzds)\rightharpoonup \int_0^\cdot\int_Z\widehat{f}^*(s,z)\widetilde{N}(dzds) .
& \ \end{aligned}
\end{align*}

\end{lem}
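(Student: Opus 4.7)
The approach is based on the uniform a priori bound from Lemma \ref{Lemma 7.I}, namely $\sup_n E\sup_{t\in[0,T]}|u^n(t)|_{\mathbb{W}}^4<\infty$. Combined with the growth conditions (S1), (S2) and the quadratic bound (\ref{Eq B-02}), this yields uniform bounds on each of the processes $u^n$, $\widehat{A}u^n$, $\widehat{B}(u^n,u^n)$, $\widehat{F}(u^n,\cdot)$, $\widehat{\sigma}(u^n,\cdot)$, $\widehat{f}(u^n(\cdot-),\cdot,\cdot)$ in suitable reflexive Banach spaces or preduals thereof. Banach--Alaoglu compactness together with a standard diagonal extraction then produces a single subsequence $(n_k)$ along which all six convergences hold simultaneously.

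For (i), the embedding $\mathbb{W}\hookrightarrow\mathbb{V}$ yields uniform bounds of $u^n$ in $L^2(\Omega\times[0,T];\mathbb{V})$ and in $L^2(\Omega;L^\infty([0,T];\mathbb{V}))\cong\bigl(L^2(\Omega;L^1([0,T];\mathbb{V}))\bigr)^*$, producing a common limit $\overline{u}\in L^4(\Omega;L^\infty([0,T];\mathbb{W}))$. For (ii), since $\widehat{A}:\mathbb{V}\to\mathbb{W}^*$ is bounded and linear (via (\ref{A transform 01})), $\widehat{A}u^n$ is bounded in $L^2(\Omega\times[0,T];\mathbb{W}^*)$ and by linearity converges weakly to $\widehat{A}\overline{u}$. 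For (iii), the estimate $|\widehat{B}(u,u)|_{\mathbb{W}^*}^2\leq C_B^2|u|_{\mathbb{V}}^4$ together with the fourth-moment bound shows $\widehat{B}(u^n,u^n)$ is bounded in $L^2(\Omega\times[0,T];\mathbb{W}^*)$, and gives a weak limit $\widehat{B}^*$; identification with $\widehat{B}(\overline{u},\overline{u})$ is deferred to the subsequent monotonicity step and lies outside this lemma. For (iv), (S2) together with (\ref{inverse operator transform}) yields $\widehat{F}(u^n,\cdot)$ bounded in $L^2(\Omega\times[0,T];\mathbb{V})\hookrightarrow L^2(\Omega\times[0,T];\mathbb{W}^*)$, hence a weak limit $\widehat{F}^*$.

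For (v) and (vi), the growth condition (S2) combined with the continuity of $(I+\alpha A)^{-1}$ on $\mathbb{V}$ provides uniform bounds on $\widehat{\sigma}(u^n,\cdot)$ in $L^2(\Omega\times[0,T];\mathbb{V})$ and on $\widehat{f}(u^n(\cdot-),\cdot,\cdot)$ in $L^2(\Omega\times[0,T]\times Z; P\otimes dt\otimes \nu; \mathbb{V})$. Both spaces are Hilbert, hence reflexive, so weak subsequential limits $\widehat{\sigma}^*$ and $\widehat{f}^*$ exist. Their progressive measurability and predictability respectively are inherited from the approximating sequences because these measurability constraints define closed convex (hence weakly closed, by Mazur's theorem) subspaces of the ambient $L^2$-spaces. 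The weak-$*$ convergence of the stochastic and Poisson integrals in $L^\infty([0,T];L^2(\Omega,\mathcal{F}_T;\mathbb{V}))$ then follows from the fact that $\Phi\mapsto\int_0^\cdot\Phi\,dW$ and $g\mapsto\int_0^\cdot\int_Z g\,\widetilde{N}(dzds)$ are bounded linear operators from the respective spaces of integrands into the martingale space, via It\^o/Poisson isometry and Fubini, hence weak-to-weak-$*$ continuous.

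The main obstacle is organizational rather than analytical: one must coordinate the nested diagonal extractions so that a single subsequence $(n_k)$ serves all six conclusions, and verify that the predictability and progressive-measurability constraints survive passage to the weak limit. The genuinely mathematical difficulty — identifying the nonlinear weak limits $\widehat{B}^*$, $\widehat{F}^*$, $\widehat{\sigma}^*$, $\widehat{f}^*$ with the corresponding nonlinear functions of $\overline{u}$ — is deliberately \emph{not} addressed in this lemma; it is the content of the monotonicity argument that forms the core of the existence proof carried out in the next section.
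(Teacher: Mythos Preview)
Your proposal is correct and follows essentially the same approach as the paper: both arguments derive all six convergences from the uniform fourth-moment bound of Lemma~\ref{Lemma 7.I} together with Banach--Alaoglu compactness, the quadratic estimate~(\ref{Eq B-02}) on $\widehat{B}$, condition (S2), and the weak-to-weak continuity of the bounded linear maps $\widehat{A}$, $\Phi\mapsto\int_0^\cdot\Phi\,dW$, and $g\mapsto\int_0^\cdot\int_Z g\,\widetilde{N}$. If anything, you are more explicit than the paper about the diagonal extraction and about why progressive measurability and predictability pass to the weak limit (via Mazur's theorem), points the paper leaves implicit.
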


\begin{proof}
(i) is a direct corollary of lemma \ref{Lemma 7.I}. Since a bounded linear operator between two Banach spaces is trivially weakly continuous, so (ii) is derived by (i).
By (\ref{Eq B-02}) and Lemma \ref{Lemma 7.I} we have
\[E\int_0^T|\widehat{B}(u^{n_k}(s),u^{n_k}(s))|_{\mathbb{W}^*}^2\,ds\leq CE\int_0^T|u^{n_k}|^4_\mathbb{V}\,ds\leq C. \]
Therefore the claim (iii) holds.
By (S2) and Lemma \ref{Lemma 7.I} we know that (iv) also holds.
The first claim of (v) holds for the the same reason as (iv).
The second claim of (v) follows because
\begin{align*}
\Big\|\int_0^{\cdot}\widehat{\sigma}(u^{n_k}(s),s)\,dW(s)\Big\|_{L^{\infty}([0,T];L^2(\Omega,\mathcal{F}_T;\mathbb{V}))}^2 =\big\|\widehat{\sigma}(u^{n_k}(\cdot),\cdot)\big\|_{L^2(\Omega\times[0,T];\mathbb{V})}^2,
\end{align*}
and  a bounded linear operator between two Banach spaces is weakly continuous. (vi) holds for the same reason as (v).
\end{proof}

\begin{thm}\label{theorem existence}
Suppose the assumptions of Theorem \ref{theorem 2.1} are in place, then there exists a solution to equation (\ref{Abstract}).
\end{thm}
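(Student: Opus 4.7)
The plan is to pass to the limit in the Galerkin equation (\ref{6.1}) using the weak compactness from Lemma \ref{Lemma 22.I}, and then to identify the weak limits of the nonlinear terms with the nonlinearities evaluated at $\overline{u}$ via a localized monotonicity (Minty--Browder) argument based on (H2).

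First, I would test (\ref{6.1}) against each basis element $e_j$ (with $j\le n_k$) and use the weak and weak-$*$ convergences (i)--(vi) of Lemma \ref{Lemma 22.I} to pass to the limit $k\to\infty$ term by term. Since $\{\sqrt{\lambda_j}e_j\}$ is an orthonormal basis of $\mathbb{V}$, the resulting scalar identities combine into a single evolution equation
\begin{align*}
\overline{u}(t) = \xi + \int_0^t\bigl(-\mu\widehat{A}\overline{u}(s)-\widehat{B}^*(s)+\widehat{F}^*(s)\bigr)\,ds+\int_0^t\widehat{\sigma}^*(s)\,dW(s)+\int_0^t\int_Z\widehat{f}^*(s,z)\widetilde{N}(dzds),
\end{align*}
holding in $\mathbb{W}^*$ for every $t\in[0,T]$; in particular this yields a $\mathbb{V}$-valued c\`adl\`ag modification of $\overline{u}$.

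The crux is to identify $\widehat{B}^*=\widehat{B}(\overline{u},\overline{u})$, $\widehat{F}^*=\widehat{F}(\overline{u},\cdot)$, $\widehat{\sigma}^*=\widehat{\sigma}(\overline{u},\cdot)$, and $\widehat{f}^*=\widehat{f}(\overline{u},\cdot,\cdot)$. I would fix an arbitrary predictable $\phi\in L^\infty(\Omega\times[0,T];\mathbb{W})$ and set $\Psi^\phi(t):=\int_0^t\rho(\phi(s),s)\,ds$; applying It\^o's formula (in its jump form, cf.\ M\'etivier) to $e^{-\Psi^\phi(\cdot)}|u^{n_k}(\cdot)|_\mathbb{V}^2$ and to $e^{-\Psi^\phi(\cdot)}|\overline{u}(\cdot)|_\mathbb{V}^2$, taking expectations, subtracting, and passing to $\liminf_k$ by weak lower semicontinuity of the $\mathbb{V}$-norm (together with the uniform bound from Lemma \ref{Lemma 7.I} and the convergence $\Pi_{n_k}\xi\to\xi$ in $\mathbb{V}$), and finally using (H2) on the Galerkin side for the pair $(u^{n_k},\phi)$, should yield the key inequality
\begin{align*}
E\int_0^T e^{-\Psi^\phi(s)}\bigl\{2\langle\widehat{\mathcal{A}}^*(s)-\widehat{\mathcal{A}}(\phi(s),s),\overline{u}(s)-\phi(s)\rangle+\cdots-\rho(\phi(s),s)|\overline{u}(s)-\phi(s)|_\mathbb{V}^2\bigr\}\,ds\leq 0,
\end{align*}
where $\cdots$ collects the analogous squared differences of $\widehat{\sigma}$ and $\widehat{f}$ in the $\mathbb{V}$-norm. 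Choosing $\phi=\overline{u}$ (admissible because Lemma \ref{Lemma 7.I} gives $\overline{u}\in L^\infty(0,T;\mathbb{W})$ almost surely) forces $\widehat{\sigma}^*=\widehat{\sigma}(\overline{u},\cdot)$ and $\widehat{f}^*=\widehat{f}(\overline{u},\cdot,\cdot)$, while the standard Minty perturbation $\phi=\overline{u}-\epsilon\psi v$ (with $\psi$ bounded predictable, $v\in\mathbb{W}$, and $\epsilon\downarrow 0$), combined with the hemicontinuity (H1) and dominated convergence, identifies $\widehat{\mathcal{A}}^*=\widehat{\mathcal{A}}(\overline{u},\cdot)$ and hence the remaining two limits.

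The main obstacle is precisely this identification step: the coercivity condition (H4) is unavailable on $\mathbb{W}$ (as noted after Lemma \ref{Lemma 3.I}), so the classical Krylov--Rozovskii variational framework does not apply directly. The pathwise a priori bound $\overline{u}\in L^\infty(0,T;\mathbb{W})$ from Lemma \ref{Lemma 7.I} is what rescues the localized monotonicity approach: it ensures that $\rho(\overline{u}(s),s)=1+2C_B|\overline{u}(s)|_\mathbb{W}+C_A^2\widetilde{\rho}(s)$ is pathwise integrable in $s$, so that the exponential weight $e^{-\Psi^{\overline{u}}}$ is well defined and $\phi=\overline{u}$ is admissible in the limiting inequality. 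The $\mathbb{W}$-weak c\`adl\`ag property of $\overline{u}$ then follows from the recovered evolution equation together with the $L^\infty(0,T;\mathbb{W})$ a priori bound.
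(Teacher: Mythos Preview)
Your proposal is correct and follows essentially the same route as the paper: pass to the limit in the Galerkin equation using the weak/weak-$*$ convergences of Lemma \ref{Lemma 22.I}, then identify the nonlinear limits by applying It\^o's formula to $e^{-\int_0^\cdot\rho(\phi,\cdot)}|u^{n_k}|_\mathbb{V}^2$ and $e^{-\int_0^\cdot\rho(\phi,\cdot)}|\overline{u}|_\mathbb{V}^2$, invoking (H2), and running the Minty trick $\phi=\overline{u}$ followed by $\phi=\overline{u}-\varepsilon\psi v$. One small point: the paper takes $\phi\in L^4(\Omega;L^\infty([0,T];\mathbb{W}))$ rather than $L^\infty(\Omega\times[0,T];\mathbb{W})$, which is exactly the class containing $\overline{u}$; your choice of test class is unnecessarily restrictive, but your justification (pathwise integrability of $\rho(\overline{u}(\cdot),\cdot)$ via the a priori $\mathbb{W}$-bound) is the right one and the fix is immediate.
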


\begin{proof}
Let $\overline{u}$ be the process constructed in Lemma \ref{Lemma 22.I}. Set
\begin{align}\label{22.1}
\widehat{\mathcal{A}}^*(s):= -\mu\widehat{A}\overline{u}(s)-\widehat{B}^*(s)+\widehat{F}^*(s) ,
\end{align}
then we have
\begin{align*}
\widehat{\mathcal{A}}(u^{n_k}(s),s)\rightharpoonup \widehat{\mathcal{A}}^*(s)\ \text{ in}\ L^2(\Omega\times[0,T];\mathbb{W}^*)\ \text{under weak-* topology} .
\end{align*}

\noindent Let us define a $\mathbb{W}^*$-valued process $u$ by
\begin{align*}
 u(t):= u_0 + \int _0^t\widehat{\mathcal{A}}^*(s)\,ds +\int_0^t\widehat{\sigma}^*(s)\,dW(s) +\int_0^t\int_Z\widehat{f}^*(s,z)\widetilde{N}(dzds),\quad t\in [0,T].
\end{align*}

\noindent We first show that $u=\overline{u}$ $dt\otimes$ $P$-a.e..
For all $w\in \bigcup_{n=1}^{\infty}\mathbb{W}_n$, $\varphi\in L^{\infty}([0,T]\times\Omega)$
\begin{align*}
& E\int_0^T\langle\overline{u}(t),\varphi(t)w\rangle\,dt=\lim_{k\rightarrow\infty}E\int_0^T\langle u^{n_k}(t),\varphi(t)w\rangle\,dt \\
=&\lim_{k\rightarrow\infty}E\int_0^T\langle u^{n_k}(0),\varphi(t)w\rangle\,dt +\lim_{k\rightarrow\infty}E\int_0^T\int_0^t\langle\Pi_{n_k}\widehat{\mathcal{A}}(u^{n_k}(s),s),\varphi(t)w\rangle\,dsdt \\
&+\lim_{k\rightarrow\infty}E\int_0^T\langle\int_0^t\Pi_{n_k}\widehat{\sigma}(u^{n_k}(s),s)\,dW(s),\varphi(t)w\rangle\,dt\\
&+\lim_{k\rightarrow\infty}E\int_0^T\langle\int_0^t\int_Z\Pi_{n_k}\widehat{f}(u^{n_k}(s-),s,z)\widetilde{N}(dzds),\varphi(t)w\rangle\,dt\\
=&\lim_{k\rightarrow\infty}E\Big((u^{n_k}(0),w)_{\mathbb{V}}\int_0^T\varphi(t)\,dt \Big)
+\lim_{k\rightarrow\infty}E\int_0^T\langle\widehat{\mathcal{A}}(u^{n_k}(s),s),\int_s^T\varphi(t)\,dt\ w\rangle\,ds \\
&+\lim_{k\rightarrow\infty}\int_0^TE\Big(\int_0^t\widehat{\sigma}(u^{n_k}(s),s)\,dW(s),\varphi(t)w\Big)\,dt \\
&+\lim_{k\rightarrow\infty}\int_0^TE\Big(\int_0^t\int_Z\widehat{f}(u^{n_k}(s-),s,z)\widetilde{N}(dzds),\varphi(t)w\Big)\,dt \\
=&E\int_0^T\langle u(0),\varphi(t)w\rangle\,dt +E\int_0^T\int_0^t\langle\widehat{\mathcal{A}}^*(s),\varphi(t)w\rangle\,dsdt \\
&+E\int_0^T\langle\int_0^t\widehat{\sigma}^*(s)\,dW(s),\varphi(t)w\rangle\,dt +E\int_0^T\langle\int_0^t\int_Z\widehat{f}^*(s)\widetilde{N}(dzds),\varphi(t)w\rangle\,dt\\
=&E\int_0^T\langle u(t),\varphi(t)w\rangle\,dt .
\end{align*}

\noindent So, we have $u=\overline{u}$ $dt\otimes P$-a.e. and then $u\in L^4(\Omega;L^{\infty}([0,T];\mathbb{W}))$. By Theorem 2 of \cite{1982-Gyoengy-p153-173} we further deduce that $u$ is a $\mathbb{V}$-valued c\`{a}dl\`{a}g $\mathcal{F}_t$-adapted process and
\begin{align}\label{26.1}
\begin{aligned}
|u(t)|_{\mathbb{V}}^2=&|u(0)|_{\mathbb{V}}^2 +2\int_0^t\langle\widehat{\mathcal{A}}^*(s),\overline{u}(s)\rangle\,ds +\int_0^t|\widehat{\sigma}^*(s)|_{\mathbb{V}}^2\,ds +2\int_0^t(u(s),\widehat{\sigma}^*(s))_{\mathbb{V}}\,dW(s) \\
&+2\int_0^t\int_Z(u(s-),\widehat{f}^*(s,z))_{\mathbb{V}}\widetilde{N}(dzds) +\int_0^t\int_Z|\widehat{f}^*(s,z)|_{\mathbb{V}}^2\,N(dzds) .
\end{aligned}
\end{align}

\noindent Moreover, By the same arguments as in \cite{2001-Temam-p-}, we can show that the paths of $u$ are $\mathbb{W}$-valued weakly  c\`{a}dl\`{a}g. Therefore, in order to prove that $u$ is a solution of equation (\ref{Abstract}), it suffices to prove that \
\begin{align*}
&\widehat{\mathcal{A}}(u(s),s)=\widehat{\mathcal{A}}^*(s),\quad \widehat{\sigma}(u(s),s)=\widehat{\sigma}^*(s),\quad dt\otimes P-a.e.;\\
&\widehat{f}(u(s-),s,z)=\widehat{f}^*(s,z)\quad dt\otimes P\otimes\nu-a.e. .
\end{align*}

Recall $\rho(u,t)$ in (\ref{4.1}).
Let $\phi$ be a $\mathbb{V}$-valued progressively measurable process and $\phi\in L^4(\Omega;L^{\infty}([0,T];\mathbb{W}))$, applying It\^{o} formula (cf.\cite{1982-Metivier-p-}) we have
\begin{align*}
&{\rm e}^{-\int_0^t\rho(\phi(s),s)\,ds}|u^{n_k}(t)|_{\mathbb{V}}^2 \\
=&|u^{n_k}(0)|_{\mathbb{V}}^2+ \int_0^t{\rm e}^{-\int_0^s\rho(\phi(r),r)\,dr}\Big[2\langle\widehat{\mathcal{A}}(u^{n_k}(s),s),u^{n_k}(s)\rangle \\
& +|\Pi_{n_k}\widehat{\sigma}(u^{n_k}(s),s)|_{\mathbb{V}}^2 -\rho(\phi(s),s)|u^{n_k}(s)|_{\mathbb{V}}^2\Big]\,ds\\
& +2\int_0^t{\rm e}^{-\int_0^s\rho(\phi(r),r)\,dr}(\widehat{\sigma}(u^{n_k}(s),s),u^{n_k}(s))_{\mathbb{V}}\,dW(s)\\
& +2\int_0^t\int_Z{\rm e}^{-\int_0^s\rho(\phi(r),r)\,dr}(\widehat{f}(u^{n_k}(s-),s,z),u^{n_k}(s-))_{\mathbb{V}}\widetilde{N}(dzds) \\
& +2\int_0^t\int_Z{\rm e}^{-\int_0^s\rho(\phi(r),r)\,dr}|\Pi_{n_k}\widehat{f}(u^{n_k}(s-),s,z)|_{\mathbb{V}}^2\,N(dzds) .
\end{align*}

\noindent Taking expectations on both sides of the above equality and by the local monotonicity in lemma \ref{Lemma 3.I} we get
\begin{align*}
E\Big({\rm e}^{-\int_0^t\rho(\phi(s),s)\,ds}|u^{n_k}(t)|_{\mathbb{V}}^2\Big) -E|u^{n_k}(0)|_{\mathbb{V}}^2
\leq EI_1(t) +EI_2(t) \leq EI_2(t) ,
\end{align*}
where
\begin{align*}
I_1(t):=&\int_0^t{\rm e}^{-\int_0^s\rho(\phi(r),r)\,dr}\Big(2\langle\widehat{\mathcal{A}}(u^{n_k}(s),s)-\widehat{\mathcal{A}}(\phi(s),s),u^{n_k}(s)-\phi(s)\rangle\\
& +|\widehat{\sigma}(u^{n_k}(s),s)-\widehat{\sigma}(\phi(s),s)|_{\mathbb{V}}^2 +\int_Z|\widehat{f}(u^{n_k}(s),s,z)-\widehat{f}(\phi(s),s,z)|_{\mathbb{V}}^2\,\nu(dz) \\
& -\rho(\phi(s),s)|u^{n_k}(s)-\phi(s)|_{\mathbb{V}}^2\Big)\,ds \leq 0 ;
\end{align*}

\begin{align*}
I_2(t):=&\int_0^t{\rm e}^{-\int_0^s\rho(\phi(r),r)\,dr}\Big(2\langle\widehat{\mathcal{A}}(u^{n_k}(s),s)-\widehat{\mathcal{A}}(\phi(s),s),\phi(s)\rangle\\
&+2\langle\widehat{\mathcal{A}}(\phi(s),s),u^{n_k}(s)\rangle -|\widehat{\sigma}(\phi(s),s)|_{\mathbb{V}}^2 +2\big(\widehat{\sigma}(u^{n_k}(s),s),\widehat{\sigma}(\phi(s),s)\big)_{\mathbb{V}} \\ &+2\int_Z\big(\widehat{f}(u^{n_k}(s),s,z),\widehat{f}(\phi(s),s,z)\big)_{\mathbb{V}}\,\nu(dz) -\int_Z|\widehat{f}(\phi(s),s,z)|_{\mathbb{V}}^2\,\nu(dz) \\
&-2\rho(\phi(s),s)\big(u^{n_k}(s),\phi(s)\big)_{\mathbb{V}} +\rho(\phi(s),s)|\phi(s)|_{\mathbb{V}}^2\Big)\,ds .
\end{align*}

\noindent Since for any nonnegative function $\psi\in L^{\infty}([0,T])$
\begin{align}\label{29.1}
&E\int_0^T\psi(t)|\overline{u}(t)|_{\mathbb{V}}^2\,dt
=\lim_{k\rightarrow\infty}E\int_0^T(\psi(t)\overline{u}(t),u^{n_k}(t))_{\mathbb{V}}\,dt \nonumber\\
\leq &\Big(E\int_0^T\psi(t)|\overline{u}(t)|_{\mathbb{V}}^2\,dt\Big)^{\frac{1}{2}} \liminf_{k\rightarrow\infty}\Big(E\int_0^T\psi(t)|u^{n_k}(t)|_{\mathbb{V}}^2\,dt\Big)^{\frac{1}{2}} ,
\end{align}
noticing that $u=\overline{u}\quad dt\otimes P$-a.e., we have
\begin{align*}
E\int_0^T\psi(t)|u(t)|_{\mathbb{V}}^2\,dt \leq \liminf_{k\rightarrow\infty}E\int_0^T\psi(t)|u^{n_k}(t)|_{\mathbb{V}}^2\,dt .
\end{align*}

\noindent Using the Fubini theorem, then letting $k\rightarrow\infty$ and by Lemma \ref{Lemma 22.I} we obtain that
\begin{align}\label{31.1}
\begin{aligned}
&E\int_0^T\psi(t)\Big[{\rm e}^{-\int_0^t\rho(\phi(s),s)\,ds}|u(t)|_{\mathbb{V}}^2-|u(0)|_{\mathbb{V}}^2\Big]\,dt \\
\leq &\liminf_{k\rightarrow\infty}E\int_0^T\psi(t)\Big[{\rm e}^{-\int_0^t\rho(\phi(s),s)\,ds}|u^{n_k}(t)|_{\mathbb{V}}^2-|u^{n_k}(0)|_{\mathbb{V}}^2\Big]\,dt \\
\leq & \liminf_{k\rightarrow\infty}E\int_0^T\psi(t)I_2(t)\,dt \\
\leq & E\int_0^T\psi(t)\int_0^t{\rm e}^{-\int_0^s\rho(\phi(r),r)\,dr}\Big[2\langle\widehat{\mathcal{A}}^*(s)-\widehat{\mathcal{A}}(\phi(s),s),\phi(s)\rangle\\
&+2\langle\widehat{\mathcal{A}}(\phi(s),s),\overline{u}(s)\rangle -|\widehat{\sigma}(\phi(s),s)|_{\mathbb{V}}^2 +2\big(\widehat{\sigma}^*(s),\widehat{\sigma}(\phi(s),s)\big)_{\mathbb{V}} \\ &+2\int_Z\big(\widehat{f}^*(s,z),\widehat{f}(\phi(s),s,z)\big)_{\mathbb{V}}\,\nu(dz) -\int_Z|\widehat{f}(\phi(s),s,z)|_{\mathbb{V}}^2\,\nu(dz) \\
&-2\rho(\phi(s),s)\big(\overline{u}(s),\phi(s)\big)_{\mathbb{V}} +\rho(\phi(s),s)|\phi(s)|_{\mathbb{V}}^2\Big]\,dsdt .
\end{aligned}
\end{align}

\noindent On the other hand, by (\ref{26.1}) and It\^{o} formula,
\begin{align*}
& E\Big({\rm e}^{-\int_0^t\rho(\phi(s),s)\,ds}|u(t)|_{\mathbb{V}}^2\Big) -E|u(0)|_{\mathbb{V}}^2 \\
=& E\int_0^t{\rm e}^{-\int_0^s\rho(\phi(r),r)\,dr}\Big[2\langle\widehat{\mathcal{A}}^*(s),\overline{u}(s)\rangle
+|\widehat{\sigma}^*(s)|_{\mathbb{V}}^2+\int_Z|\widehat{f}^*(s,z)|_{\mathbb{V}}^2\,\nu(dz)
-\rho(\phi(s),s)|u(s)|_{\mathbb{V}}^2\Big]\,ds .
\end{align*}
Hence, by the Fubini theorem we get
\begin{align}\label{32.1}
\begin{aligned}
&E\int_0^T\psi(t)\Big[{\rm e}^{-\int_0^t\rho(\phi(s),s)\,ds}|u(t)|_{\mathbb{V}}^2-|u(0)|_{\mathbb{V}}^2\Big]\,dt \\
= &E\int_0^T\psi(t)\int_0^t{\rm e}^{-\int_0^s\rho(\phi(r),r)\,dr}\Big[2\langle\widehat{\mathcal{A}}^*(s),\overline{u}(s)\rangle
+|\widehat{\sigma}^*(s)|_{\mathbb{V}}^2 \\
&+\int_Z|\widehat{f}^*(s,z)|_{\mathbb{V}}^2\,\nu(dz)
-\rho(\phi(s),s)|u(s)|_{\mathbb{V}}^2\Big]\,dsdt .
\end{aligned}
\end{align}

\noindent Combining (\ref{31.1}) and (\ref{32.1}) we arrive at
\begin{align}\label{32.2}
\begin{aligned}
&E\int_0^T\psi(t)\int_0^t{\rm e}^{-\int_0^s\rho(\phi(r),r)\,dr}\Big[2\langle\widehat{\mathcal{A}}^*(s)-\widehat{\mathcal{A}}(\phi(s),s),\overline{u}(s)-\phi(s)\rangle +|\widehat{\sigma}^*(s)-\widehat{\sigma}(\phi(s),s)|_{\mathbb{V}}^2 \\ &+\int_Z|\widehat{f}^*(s,z)-\widehat{f}(\phi(s),s,z)|_{\mathbb{V}}^2\,\nu(dz)
 -\rho(\phi(s),s)|\overline{u}(s)-\phi(s)|_{\mathbb{V}}^2\Big]\,dsdt
\leq 0 .
\end{aligned}
\end{align}

\noindent If we put $\phi(s)=\overline{u}(s)$ in (\ref{32.2}), we obtain
\begin{gather*}
\widehat{\sigma}^*(s)=\widehat{\sigma}(\overline{u}(s),s)=\widehat{\sigma}(u(s),s)\quad \text{in}\ \ L^2(\Omega\times[0,T];\mathbb{V});\\
\widehat{f}^*(s,z)=\widehat{f}(\overline{u}(s),s,z)=\widehat{f}(u(s),s,z)=\widehat{f}(u(s-),s,z)\ \text{in}\  L^2(\Omega\times[0,T]\times Z;P\times \nu\times dt;\mathbb{V}).
\end{gather*}

\noindent Thus, (\ref{32.2}) implies that
\begin{align}\label{33.1}
\begin{aligned}
&E\int_0^T\psi(t)\int_0^t{\rm e}^{-\int_0^s\rho(\phi(r),r)\,dr}\Big[2\langle\widehat{\mathcal{A}}^*(s)-\widehat{\mathcal{A}}(\phi(s),s),\overline{u}(s)-\phi(s)\rangle\\
& -\rho(\phi(s),s)|\overline{u}(s)-\phi(s)|_{\mathbb{V}}^2\Big]\,dsdt
\leq 0 .
\end{aligned}
\end{align}

\noindent Putting $\phi=\overline{u}-\varepsilon\widetilde{\phi}w$ in (\ref{33.1}) for $\widetilde{\phi}\in L^{\infty}([0,T]\times\Omega;dt\otimes P)$ and $w\in\mathbb{W}$, then dividing both sides by $\varepsilon$ and letting $\varepsilon\rightarrow 0$, we finally have
\begin{align*}
E\int_0^T\psi(t)\int_0^t{\rm e}^{-\int_0^s\rho(\overline{u}(r),r)\,dr}\Big[2\widetilde{\phi}(s)\langle\widehat{\mathcal{A}}^*(s)-\widehat{\mathcal{A}}(\overline{u}(s),s),w\rangle\Big]\,dsdt
\leq 0 .
\end{align*}
Since $\widetilde{\phi}$ is arbitrary,
\[\widehat{\mathcal{A}}^*(s)=\widehat{\mathcal{A}}(\overline{u}(s),s)=\widehat{\mathcal{A}}(u(s),s)\quad\text{in}\ \ L^2(\Omega\times[0,T];\mathbb{W}^*) .\]

\noindent Therefore, we conclude that the process $u$ is a solution of equation (\ref{Abstract}).

\end{proof}

\subsection{Uniqueness of solutions}

We finally proceed to show the uniqueness of solutions to equation (\ref{Abstract}), thus completes the proof of Theorem \ref{theorem 2.1}.

Suppose that $u=\{u(t)\}$ and $v=\{v(t)\}$ are two solutions of (\ref{Abstract}) with initial conditions $u_0, v_0$ respectively, i.e.
\begin{gather*}
u(t)=u_0+\int_0^t\widehat{\mathcal{A}}(u(s),s)\,ds +\int_0^t\widehat{\sigma}(u(s),s)\,dW(s) +\int_0^t\int_Z\widehat{f}(u(s-),s,z)\widetilde{N}(dsdz),\ t\in[0,T];\\
v(t)=u_0+\int_0^t\widehat{\mathcal{A}}(v(s),s)\,ds +\int_0^t\widehat{\sigma}(v(s),s)\,dW(s) +\int_0^t\int_Z\widehat{f}(v(s-),s,z)\widetilde{N}(dsdz),\ t\in[0,T].
\end{gather*}

\noindent We define the stopping time
\[\tau_N:=\inf\{t\in[0,T]: |u(t)|_{\mathbb{V}}\geq N\}\wedge\inf\{t\in[0,T]: |v(t)|_{\mathbb{V}}\geq N\}\wedge T .\]
Using the It\^{o} formula (cf.\cite{1982-Metivier-p-}) we have
\begin{align*}
&{\rm e}^{-\int_0^{t\wedge\tau_N}\rho(v(s),s)\,ds}|u(t\wedge\tau_N)-v(t\wedge\tau_N)|_{\mathbb{V}}^2 \\
=&|u_0-v_0|_{\mathbb{V}}^2+ \int_0^{t\wedge\tau_N}{\rm e}^{-\int_0^s\rho(v(r),r)\,dr}\Big[2\langle\widehat{\mathcal{A}}(u(s),s)-\widehat{\mathcal{A}}(v(s),s),u(s)-v(s)\rangle \\
& +|\widehat{\sigma}(u(s),s)-\widehat{\sigma}(v(s),s)|_{\mathbb{V}}^2 -\rho(v(s),s)|u(s)-v(s)|_{\mathbb{V}}^2\Big]\,ds\\
& +2\int_0^{t\wedge\tau_N}{\rm e}^{-\int_0^s\rho(v(r),r)\,dr}(\widehat{\sigma}(u(s),s)-\widehat{\sigma}(v(s),s),u(s)-v(s))_{\mathbb{V}}\,dW(s)\\
& +2\int_0^{t\wedge\tau_N}\int_Z{\rm e}^{-\int_0^s\rho(v(r),r)\,dr}(\widehat{f}(u(s-),s,z)-\widehat{f}(v(s-),s,z),u(s-)-v(s-))_{\mathbb{V}}\widetilde{N}(dzds) \\
& +2\int_0^{t\wedge\tau_N}\int_Z{\rm e}^{-\int_0^s\rho(v(r),r)\,dr}|\widehat{f}(u(s-),s,z)-\widehat{f}(v(s-),s,z)|_{\mathbb{V}}^2\,N(dzds) .
\end{align*}

\noindent It then follows from the local monotonicity (H2) of Lemma \ref{Lemma 3.I} that
\begin{align*}
&E\big[{\rm e}^{-\int_0^{t\wedge\tau_N}\rho(v(s),s)\,ds}|u(t\wedge\tau_N)-v(t\wedge\tau_N)|_{\mathbb{V}}^2\Big] -E|u_0-v_0|_{\mathbb{V}}^2 \\
= & E\Big[\int_0^{t\wedge\tau_N}{\rm e}^{-\int_0^s\rho(v(r),r)\,dr}\Big(2\langle\widehat{\mathcal{A}}(u(s),s)-\widehat{\mathcal{A}}(v(s),s),u(s)-v(s)\rangle \\
& +|\widehat{\sigma}(u(s),s)-\widehat{\sigma}(v(s),s)|_{\mathbb{V}}^2 -\rho(v(s),s)|u(s)-v(s)|_{\mathbb{V}}^2 \\
& +\int_Z{\rm e}^{-\int_0^s\rho(v(r),r)\,dr}|\widehat{f}(u(s),s,z)-\widehat{f}(v(s),s,z)|_{\mathbb{V}}^2\,\nu(dz)\Big)\,ds \Big] \\
\leq & 0 .
\end{align*}

\noindent Hence if $u_0=v_0$ $P$-a.e., then
\[E\Big[{\rm e}^{-\int_0^{t\wedge\tau_N}\rho(v(s),s)\,ds}|u(t\wedge\tau_N)-v(t\wedge\tau_N)|_{\mathbb{V}}^2\Big]=0 ,\quad t\in[0,T] .\]

\noindent By (\ref{4.1}) we get
\[\int_0^T\rho(v(s),s)\,ds <\infty,\quad  P\text{-a.s.} .\]

\noindent Therefore, by letting $N\rightarrow\infty$ we have $u(t)=v(t)$ $P$-a.s.. The pathwise uniqueness follows from the c\`{a}dl\`{a}g property of $u,v$ in the solutions.

This completes the proof of Theorem \ref{theorem 2.1}.

\vskip0.5cm {\small {\bf  Acknowledgements}\ \  This work is partly supported by National Natural Science Foundation of China (11671372, 11431014, 11401557)}.


\end{document}